\newtheorem{theorem}{Theorem}[section]
\newtheorem{lemma}[theorem]{Lemma}
\newtheorem {proposition}[theorem]{Proposition}
\newtheorem {corollary} [theorem] {Corollary}
\theoremstyle{definition}
\newtheorem{definition}[theorem]{Definition}
\newtheorem{example}[theorem]{Example}
\newtheorem{remark}[theorem]{Remark}
\numberwithin {equation}{section}
\def\cs{{$C^{\ast}$}}
\def\ra{{\rightarrow}}
\def\q{{\mathbb{Q}}}
\def\z{{\mathbb{Z}}}
\def\n{{\mathbb{N}}}
\def\c{{\mathbb{C}}}
\def\r{{\mathbb{R}}}
\def\H{{\mathcal{H}}}
\def\gh{{(G,H)}}
\def\gk{{(G,K)}}
\def\f{{\varphi}}
\def\si{{\sigma}}
\def\gaa{{\Gamma}}
\def\ga{{\gamma}}
\def\wrt{{with respect to }}
\def\ba{{\backslash}}
\def\inv{{^{-1}}}
\begin{document}
\title{Various commensurability relations in Hecke pairs and property (RD)}

\author{Vahid Shirbisheh}
%\address{}
\email{shirbisheh@gmail.com}

\subjclass[2010]{Primary; 20C08 Secondary; 20F65, 20F24, 46L89.}

%\date{\today}
\keywords{Hecke pairs, Hecke \cs-algebras, length functions, property (RD) (rapid decay), commensurable subgroups, commutator finite groups, nearly normal subgroups, extensions of groups and Hecke pairs.}

\begin{abstract}In this paper we continue our study of property (RD) for Hecke pairs initiated in \cite{s1}. We study the permanence of property (RD) under different commensurability relations of subgroups in Hecke pairs.  As an application, we prove that if $H$ is a normal subgroup of a group $G$ and $K$ is a subgroup of $G$ commensurable to $H$, then the Hecke pair $(G,K)$ has (RD) if and only if the quotient group $G/H$ has (RD). This is used to investigate an infinite number of non-elementary examples of Hecke pairs with property (RD). In particular, we introduce a class of groups all whose subgroups are almost normal and all such Hecke pairs have property (RD). We also discuss property (RD) of certain Hecke pairs arising from group extensions.
\end{abstract}
\maketitle

%\tableofcontents
%%%%%%%%%%%%%%%%%%%%%%%%%%%%%%%%%%      Section        %%%%%%%%%%%%%%%%%%%%%%%%%%%%%%%%%%%%%%%%%%%%%%%%%%%%%%%%%
\section {Introduction}
\label{sec:intro}
In this paper, all groups are discrete. A subgroup $H$ of a group $G$ is called an almost normal subgroup of $G$ if every double coset of $H$ is a union of finitely many left cosets of $H$. In this case the pair $\gh$ is called a Hecke pair. One should not confuse our definition of almost normal subgroups, which is popular in the context of Hecke \cs-algebras, with the other definition which is popular in the literature of group theory. By the second definition, $H$ is an almost normal subgroup of $G$ if its normalizer is a subgroup in $G$ of finite index. In group theory, an almost normal subgroup of $G$ (according to our definition) is called a conjugate-commensurable subgroup of $G$. This is because every conjugate of $H$, like $gHg\inv$, is commensurable with $H$. The Hecke algebra $\H\gh$ associated to the Hecke pair $\gh$ is the vector space of all finite support complex functions on the set of double cosets of $H$ in $G$ equipped with a convolution like product similar to (\ref{f:regrep}). The (reduced) Hecke \cs-algebra associated to a Hecke pair $\gh$ is the completion of the image of the Hecke algebra $\H\gh$ under the regular representation $\lambda: \H(G,H) \ra \mathcal{B}(\ell^2(H\ba G))$ defined by
\begin{equation}
\label{f:regrep} \lambda(f)(\xi)(g):=(f*\xi)(g):=\sum_{h\in <H\ba G>} f(gh\inv) \xi(h),
\end{equation}
for all $f \in \H(G,H)$ and $\xi \in \ell^2(H\ba G)$. In noncommutative geometry, Hecke \cs-algebras first appeared in \cite{bc} to construct a \cs-dynamical system reviling the class field theory of the field $\q$ of rational numbers. They are also considered as a generalization of reduced group \cs-algebras. This is the motivation of our program started in \cite{s1} to extend tools and methods of noncommutative geometry, developed originally for reduced group \cs-algebras, to the more general setting of Hecke \cs-algebras. In \cite{s1}, we defined property (RD) for Hecke pairs, (see Definition \ref{def:rd} below), and studied a class of examples of Hecke pairs with this property. We also showed that when a Hecke pair possesses property (RD), the algebra of rapidly decreasing functions is a smooth subalgebra of the associated Hecke \cs-algebra, in other words, it is dense and stable under holomorphic functional calculus of the Hecke \cs-algebra. In this paper we continue our study of property (RD) for Hecke pairs. We refer the reader to \cite{s1, s3} for basic definitions, notations and results, where we also explained why property (RD) is important for developing noncommutative geometry over Hecke \cs-algebras. In order to give a big picture of our work in the present paper, we need to differ between different types of commensurability between groups and subgroups.
\begin{definition}
\label{def:comm}
\begin{itemize}
\item [(i)] Two subgroups $H$ and $K$ of a group $G$ are called commensurable if  there exists some $g\in G$ such that $H\cap gKg^{-1}$ is a finite index subgroup of both $H$ and $K$. The subgroups $H$ and $K$ are called strongly commensurable if $H\cap K$ has finite index in both $H$ and $K$.
\item [(ii)] Two groups $G_1$ and $G_2$ are called weakly commensurable if they have subgroups $H_1\leq G_1$ and $H_1\leq G_1$ of finite index such that $H_1$ and $H_2$ are isomorphic.
\end{itemize}
\end{definition}

We note that all the above three different definitions of commensurability relations have appeared in literatures under the same name, see for example \cite{harpe, krieg, luck, shimura}, and the terms ``weakly'' and ``strongly'' are temporary terminologies used in this paper. For another definition of commensurability see Definition 11.42 of \cite{meier}, which is equivalent to weak commensurability. The notion of strong commensurability and almost normal subgroups are closely related. Let $H$ be a subgroup of a group $G$. The commensurator of $H$ in $G$ is the set
\[ \tilde{H}:= \{ g\in G; H \,\text{and}\, gHg^{-1} \,\text{are strongly commensurable} \},
\]
which happens to be a subgroup of $G$, \cite{shimura}. It follows immediately from the definition that  $H$ is almost normal in $G$ if and only if $\tilde{H}=G$.

It is easily seen that all the above commensurability relations are equivalence relations and it is desirable to investigate the behavior of different properties of groups or pairs of groups and subgroups (such as Hecke pairs) under these equivalences. In this respect, so far, the following theorem was proved in \cite{j2}.

\begin{theorem}
\label{thm:comgroups}
(Paul Jolissaint) Property (RD) is invariant under weak commensurability of groups.
\end{theorem}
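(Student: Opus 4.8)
The plan is to reduce the statement to the two elementary moves that generate weak commensurability and to check that property (RD) survives each of them. We prove that (RD) is preserved under the two elementary operations whose composites generate weak commensurability (cf. Definition 11.42 of \cite{meier}): (a) passing between a group and a finite-index subgroup, and (b) passing between a group $G$ and a quotient $G/N$ with $N\trianglelefteq G$ finite. Since ``having (RD)'' is a property of an abstract group and the relation is generated by (a) and (b), it suffices to show that in each elementary move the source has (RD) if and only if the target does; the general case then follows by composing finitely many such equivalences. Throughout I work with length functions $\ell\colon G\to\r_{\ge0}$ (satisfying $\ell(e)=0$, $\ell(g\inv)=\ell(g)$, $\ell(gh)\le\ell(g)+\ell(h)$) and use that $G$ has (RD) exactly when, for some $\ell$, there are $C,s>0$ with $\|\lambda(f)\|_{\csr(G)}\le C\,\|(1+\ell)^sf\|_2$ for all finitely supported $f$.

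For move (a) the downward direction is immediate: if $H\le G$ and $G$ has (RD) \wrt $\ell$, then the inclusion $\csr(H)\hookrightarrow\csr(G)$ is isometric (the regular representation of $H$ is a subrepresentation of $\lambda_G|_{\c[H]}$, decomposed over the right cosets $H\ba G$), so the inequality restricted to $f\in\c[H]$ with $\ell|_H$ gives (RD) for $H$. For the upward direction, let $[G:H]=n<\infty$ with left-coset representatives $g_1,\dots,g_n$. Any finitely supported $f$ decomposes as $f=\sum_i\lambda(g_i)f_i$ with $f_i\in\c[H]$ and $f_i(x)=f(g_ix)$; by the triangle inequality, the isometry above, and (RD) for $H$ one gets $\|\lambda(f)\|\le\sum_i\|f_i\|_{\csr(H)}\le C\sum_i\|(1+\ell)^sf_i\|_2$. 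Since $1+\ell(x)\le(1+M)(1+\ell(g_ix))$ with $M=\max_i\ell(g_i)$, and the cosets $g_iH$ partition $G$, a Cauchy--Schwarz step bounds the right-hand side by $C\sqrt n\,(1+M)^s\|(1+\ell)^sf\|_2$, so $G$ has (RD). No normality is needed here, so move (a) already covers the isomorphic-finite-index-subgroups description.

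For move (b) write $Q=G/N$, $\pi\colon G\to Q$, $m=|N|$, and set $p=\frac1m\sum_{n\in N}\lambda(n)\in\c[G]$, which is a projection, central in $\csr(G)$ because $N$ is normal. One checks that the corner $\csr(G)p$ is canonically isomorphic to $\csr(Q)$, sending $\lambda_Q(\pi(g))$ to $\lambda_G(g)p$ (the range of $p$ is the space of functions constant on the cosets $Ng$, i.e.\ $\ell^2(Q)$). If $G$ has (RD) \wrt $\ell$, put $\ell_Q(q):=\min_{\pi(g)=q}\ell(g)$, a length on $Q$; choosing a minimizing lift $g_q$, any $b\in\c[Q]$ lifts to $\tilde b=\sum_q b(q)\lambda(g_q)$ with $\|\lambda_Q(b)\|=\|\tilde b\,p\|\le\|\tilde b\|_{\csr(G)}\le C\|(1+\ell)^s\tilde b\|_2=C\|(1+\ell_Q)^sb\|_2$, giving (RD) for $Q$. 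Conversely, if $Q$ has (RD) \wrt $\ell_Q$, pull it back to $\ell:=\ell_Q\circ\pi$. Using a set-section of $\pi$ to split $\ell^2(G)\cong\ell^2(Q)\otimes\ell^2(N)$, finiteness of $N$ lets one identify $\lambda_G(f)$ with a matrix-valued ($M_m$-valued) convolution operator over $Q$, an element of $\csr(Q)\otimes M_m$ whose ``coefficient at $q$'' has Hilbert--Schmidt norm comparable to $(\sum_n|f(g_qn)|^2)^{1/2}$ and sits at $\ell_Q(q)=\ell(g_q)$. Applying the matrix-amplified form of (RD) for $Q$ (valid with the same length and, up to a constant, the same exponent, since (RD) is stable under matrix amplification) and absorbing the finite fibre dimension $m$ into the constant yields $\|\lambda_G(f)\|\le C'\|(1+\ell)^sf\|_2$, so $G$ has (RD).

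The main obstacle is the converse half of move (b): transporting (RD) \emph{up} a finite-kernel extension from $G/N$ to $G$. One cannot argue by restriction, because $Q$ is a quotient rather than a subgroup and quotient maps do not induce homomorphisms of reduced \cs-algebras; finiteness of $N$ is precisely what repairs this, making $p$ an honest element of $\csr(G)$ and exhibiting $\csr(G)$ as a matrix-convolution (twisted crossed-product) algebra over $\csr(Q)$ with finite-dimensional fibres. Correctly identifying this structure, and verifying that $\ell_Q\circ\pi$ is the length making the amplified (RD) inequality transfer, is where the real work lies; the remaining moves reduce to the elementary length estimates above. Composing all the elementary equivalences then proves that (RD) is invariant under weak commensurability.
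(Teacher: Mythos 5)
Your overall architecture is sound and your downward arguments are correct, but the upward half of move (a) has a genuine gap: in the chain $\|\lambda(f)\|\le\sum_i\|f_i\|_{\csr(H)}\le C\sum_i\|(1+\ell)^s f_i\|_2$ you apply property (RD) for $H$ \wrt the restriction $\ell|_H$ of a single length function $\ell$ defined on all of $G$, whereas the hypothesis only supplies some length $\ell_H$ on $H$. To run your estimate you must first produce a length on $G$ whose restriction to $H$ dominates $\ell_H$ (so that Remark \ref{r:eqlen1} applies), and this is not automatic: any restriction of a length on $G$ is conjugation-tame, since $\ell(ghg\inv)\le \ell(h)+2\ell(g)$, while an arbitrary $\ell_H$ witnessing (RD) need not be; a naive word-type extension of $\ell_H$ to $G$ fails because $h=gh'g\inv$ with $\ell_H(h')$ small forces the extension to be small on $h$ no matter how large $\ell_H(h)$ is. For finitely generated groups the word length repairs this, but the theorem concerns arbitrary discrete groups. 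The standard fix --- pass to the normal core $H'=\bigcap_{x\in G}xHx\inv$, which has (RD) by your downward step, and replace the length by a maximum over the finite quotient, $k(g,\ga):=\max_{\beta}L_0(p((1_G,\beta)(g,\ga)))$, $L=k+k\circ(\,\cdot\,)\inv$ --- is precisely the content of Jolissaint's Lemma 2.1.2 and Proposition 2.1.5 \cite{j2}, and it is reproduced in this paper in the remark following Proposition \ref{prop:extrd}; your write-up silently assumes its conclusion. Once that length exists, your decomposition $f=\sum_i\delta_{g_i}\ast f_i$ and the Cauchy--Schwarz step are fine.

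Two further remarks on scope. First, the paper's Definition \ref{def:comm}(ii) of weak commensurability involves only isomorphic finite-index subgroups, so only your move (a), plus the trivial invariance of (RD) under abstract isomorphism, is needed; your move (b) (quotients by finite normal subgroups) addresses Meier's a priori coarser relation and is logically harmless but superfluous for the stated theorem --- and it is exactly where you place ``the real work'' (the matrix-amplified twisted-convolution argument for lifting (RD) along a finite-kernel quotient, which you only sketch, though the Hilbert--Schmidt domination trick does make it go through, and compare Proposition \ref{prop:finker} of this paper, which handles finite kernels directly). Second, for comparison: the paper's own proof is exactly your reduction to the finite-index case, after which it simply cites Jolissaint's Propositions 2.1.1 and 2.1.5 rather than reproving them; your downward argument coincides with 2.1.1, and your upward argument, once the length-extension gap above is filled, gives a more direct route than Jolissaint's extension lemma.
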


\begin{proof}
Let $H$ be a subgroup of a group $G$ of finite index. It is enough to show that $G$ has (RD) if and only if $H$ has (RD). These implications were proved in Proposition 2.1.1 and Proposition 2.1.5 of \cite{j2}.
\end{proof}

In this paper we study the invariance of property (RD) under different commensurability relations between subgroups in Hecke pairs. Our main result is Theorem \ref{thm:scom} which asserts that if $\gh$ is a Hecke pair with (RD) \wrt a length function $L$ and if $K$ is a subgroup of $G$ strongly commensurable with $H$ such that
\begin{equation}
\label{lengthcondition}
K\subseteq N_L:=\{ g\in G; L(g)=0\},
\end{equation}
then the Hecke pair $\gk$ has (RD) \wrt $L$. The above theorem is extended to the case that $H$ and $K$ are commensurable subgroups of $G$ in Corollary \ref{cor:comconj}. By assuming that one of the subgroups $H$ or $K$ is normal in $G$, one can drop the above condition, (\ref{lengthcondition}), about the length function. Therefore, in Corollary \ref{cor:normal}, it is shown that if $H$ is normal and $K$ is commensurable to $H$, then the quotient group $G/H$ has (RD) if and only if the Hecke pair $(G,K)$ has (RD). In Proposition \ref{prop:finker}, we show how group homomorphisms with finite kernels can be used to pull back property (RD). A similar result about pushing forward property (RD) by surjective homomorphisms is proved in Proposition \ref{prop:surj}. The proof of Theorem \ref{thm:comgroups} relies on the behavior of property (RD) in group extensions. Therefore, to extend this result to the framework of Hecke pairs, we study certain Hecke pairs coming from group extensions as well as the conditions which imply property (RD) for this Hecke pairs in Section \ref{sec:RDext}. So far all examples of Hecke pairs with (RD) discussed in \cite{s1} were restricted to those Hecke pairs $\gh$ that $G$ has (RD) and $H$ is a finite subgroup of $G$. The above results can be applied to prove property (RD) for Hecke pairs $\gh$ which $H$ is an infinite almost normal subgroup of $G$. For example, we find a class of groups all whose subgroups are almost normal and all such Hecke pairs have (RD). Another class of examples of Hecke pairs is also given in Example \ref{ex:examples}.

We conclude this section with recalling some notations. Let $H$ be a subgroup of a group $G$, the index of $H$ in $G$ is denoted by $|G:H|$. For $g\in G$, the number of left (resp. right) cosets of $H$ in the double coset $HgH$ is denoted by $L(g)$ (resp. $R(g)$). Therefore $H$ is almost normal in $G$ if $L(g)<\infty$ for all $g\in G$ (or equivalently, $R(g)<\infty$ for all $g\in G$). We have $L(g)=|H: H\cap gHg\inv|$ and $R(g)=|H: H\cap g\inv Hg|$, and so $L(g)=R(g\inv)$, see Page 170 of \cite{tzanev}. The reader should be careful as we use the symbol $L$ for length functions too. The set of all double cosets of a Hecke pair $\gh$ is denoted by $G//H$. An arbitrary set of representatives of right cosets (resp. double cosets and left cosets) of $H$ in $G$ is denoted by $<H\ba G>$ (resp. $<G//H>$ and $<G/H>$). The vector space of all complex functions on the set $H\ba G$ of right cosets with finite support is denoted by $\c(H\ba G)$. The subsets of non-negative real functions in $\H\gh$, $\c(H\ba G)$, and $\c G$ are denoted by $\H_+\gh$, $\r_+(H\ba G)$, and $\r_+ (G)$, respectively. A length function on a Hecke pair $\gh$ is a length function $L$ on $G$ such that $H\subseteq N_L:=\{ g\in G; L(g)=0\}$. It follows from this latter condition that $L$ is constant on each double coset. Thus, for every non-negative real number $r$, we are allowed to define
\[
B_{r,L}\gh:= \{ HgH\in <G//H>; L(g)\leq r\}.
\]
We also denote similar sets in $<H\ba G>$ and $G$ by $B_{r,L}(H\ba G)$ and $B_{r,L}(G)$, respectively. For $f\in \H\gh$, the operator norm of $\lambda(f)$ is denoted  by $\|\lambda(f)\|$ and, for $s\geq 0$, the weighted $\ell^2$-norm of $f$ with respect to $L$ is defined by
\[
\|f\|_{s,L}:=\left( \sum_{g\in <H\ba G>} \mid f(g)\mid^2
(1+L(g))^{2s}\right)^{\frac{1}{2}}.
\]
For $f\in \c(H\ba G)$ or $f\in \H\gh$, the norm of $f$ in $\ell^2(H\ba G)$  is denoted by $\|f\|_2$.
\begin{definition}
\label{def:rd} Let $L$ be a length function on a Hecke pair $\gh$. We say $\gh$ has  property (RD) \wrt $L$ if following equivalent conditions hold:
\begin{itemize}
\item[(i)] There are positive real numbers $C$ and $s$ such that the Haagerup inequality;
\begin{equation}
\label{def:rd1}
\parallel \lambda(f) \parallel\leq C\| f \|_{s,L}
\end{equation}
holds for all $f\in \H(G,H)$.
\item[(ii)] There exists a polynomial $P$ such that for any $r>0$, $f\in \H_+\gh$ so that $\text{supp}f\subseteq B_{r,L}\gh$, and $k\in \r_+(H\ba G)$, we have
\begin{equation}
\label{def:rd2}
\|f\ast k\|_2\leq P(r)\|f\|_{2}\|k\|_{2}.
\end{equation}
\end{itemize}
\end{definition}

Item (i) in the above is the original definition of property (RD). The equivalence of these conditions was shown in Proposition 2.10 of \cite{s1}. Let $L_1$ and $L_2$ be two length functions on some Hecke pair. We say $L_1$ dominates $L_2$ if there exist positive real numbers $a,b$ such that $L_2\leq a L_1+b$. The length functions $L_1$ and $L_2$ are called equivalent if they dominate each other.
\begin{remark}
\label{r:eqlen1}
If a Hecke pair has (RD) \wrt a length function $L$ it has (RD) \wrt all length functions dominating $L$, in particular those which are equivalent to $L$.
\end{remark}

%%%%%%%%%%%%%%%%%%%%%%%%%%%%%%%%%%%%%%%%%%%%%%%%%%%%%%%%%%%%%%%%%%%%%%%%%%%%%%%%%%%%%%%%%%%%%%%%%%%%%%%%%%%
%%%%%%%%%%%%%%%%%%%%%%%%%%%%%%%%%%%%        SECTION      property (RD)        %%%%%%%%%%%%%%%%%%%%%%%%%%%%%
%%%%%%%%%%%%%%%%%%%%%%%%%%%%%%%%%%%%%%%%%%%%%%%%%%%%%%%%%%%%%%%%%%%%%%%%%%%%%%%%%%%%%%%%%%%%%%%%%%%%%%%%%%%

\section{Property (RD) and commensurable subgroups of Hecke pairs}
\label{sec:commsub}

\begin{remark}
\label{r:comm1} If $H$ and $K$ are two commensurable subgroup of $G$ and $H$ is almost normal, then $gKg^{-1}$ is almost normal in $G$ for some $g\in G$, see Page 170 of \cite{tzanev}, and this implies that $K$ is almost normal in $G$.
\end{remark}
\begin{theorem}
\label{thm:scom}
Let $H$ and $K$ be two strongly commensurable and almost normal subgroups of a group $G$. If there exists a length function $L$ on $G$ such that $H, K\subseteq N_L$, then the Hecke pair  $\gh$ has (RD) \wrt $L$ if and only if the Hecke pair $\gk$ has (RD) \wrt $L$.
\end{theorem}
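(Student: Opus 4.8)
The plan is to reduce the theorem to the case of a finite-index inclusion $K\subseteq H$ and to treat that case through the convolution characterisation (ii) of property (RD) from Definition \ref{def:rd}, which is well adapted to positivity arguments. Put $L_0:=H\cap K$; strong commensurability says exactly that $L_0$ has finite index in both $H$ and $K$. First I would verify that $L_0$ is again almost normal in $G$: for $g\in G$ one has $L_0\cap gL_0g\inv\subseteq L_0\cap gHg\inv\subseteq L_0$, and the elementary bound $|L_0:L_0\cap gL_0g\inv|\le|H:H\cap gHg\inv|\,|H:L_0|$ is finite since $H$ is almost normal and $|H:L_0|<\infty$. As $L_0\subseteq H,K\subseteq N_L$, the length hypothesis is inherited. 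It then suffices to prove, \emph{in both directions}, the following finite-index statement: if $K\subseteq H$ are almost normal with $n:=|H:K|<\infty$ and $H\subseteq N_L$, then $\gh$ has (RD) \wrt $L$ if and only if $\gk$ has (RD) \wrt $L$. Applying this once to $L_0\subseteq H$ and once to $L_0\subseteq K$ and chaining the two equivalences through $(G,L_0)$ gives the theorem.

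\emph{A comparison dictionary.} Write $H=\bigsqcup_{i=1}^n Ka_i$ with $a_1=e$, so each coset $Hx$ splits as $\bigsqcup_i Ka_ix$ and $K\ba G\ra H\ba G$ has all fibres of size $n$; denote by $\|\cdot\|_{2,H}$ and $\|\cdot\|_{2,K}$ the norms of $\ell^2(H\ba G)$ and $\ell^2(K\ba G)$. The computation I would record first is that for an $H$-bi-invariant $F$ and any $\xi\in\ell^2(K\ba G)$ one has $(F\ast\xi)(g)=\sum_{x\in<H\ba G>}F(gx\inv)\bar\xi(Hx)$, where $\bar\xi(Hx):=\sum_{i}\xi(Ka_ix)$; hence $F\ast\xi$ is left $H$-invariant, coincides with the $\gh$-convolution $F\ast\bar\xi$, and satisfies $\|F\ast\xi\|_{2,K}=\sqrt n\,\|F\ast\bar\xi\|_{2,H}$. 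Together with the routine estimates $\|\bar\xi\|_{2,H}\le\sqrt n\,\|\xi\|_{2,K}$, the identity $\|F\|_{2,K}=\sqrt n\,\|F\|_{2,H}$ for $H$-bi-invariant $F$, and the fact that a $k\in\r_+(H\ba G)$ pulled back to $k'\in\r_+(K\ba G)$ satisfies $\bar{k'}=nk$, this dictionary yields the easy implication $\gk\Rightarrow\gh$ directly: given $f\in\H_+\gh$ with $\mathrm{supp}\,f\subseteq B_{r,L}\gh$ and $k\in\r_+(H\ba G)$, pull $k$ back to $k'$, apply (RD) of $\gk$ to $f$ and $k'$, and translate back, the constants changing only by fixed powers of $n$.

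\emph{The hard direction, and the main obstacle.} The difficult implication is $\gh\Rightarrow\gk$, because a general $f\in\H_+\gk$ is only $K$-bi-invariant, so the dictionary above (which needs $H$-bi-invariance) does not apply and such an $f$ cannot be fed into the (RD) inequality for $\gh$; I expect this to be the main obstacle. To circumvent it I would dominate $f$ by its $H$-bi-invariantisation $F(g):=\sum_{i,j=1}^n f(a_iga_j\inv)$. Since $a_1=e$ and $f\ge0$, all summands are nonnegative and $F\ge f$ pointwise; because $a_i,a_j\in N_L$ one checks $\mathrm{supp}\,F\subseteq B_{r,L}\gh$ whenever $\mathrm{supp}\,f\subseteq B_{r,L}\gk$; and a short permutation argument shows $F\in\H_+\gh$. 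Positivity then gives $f\ast k\le F\ast k$ pointwise for $k\in\r_+(K\ba G)$, hence $\|f\ast k\|_{2,K}\le\|F\ast k\|_{2,K}$, and now $F$ is $H$-bi-invariant, so the dictionary together with (RD) of $\gh$ bounds the right-hand side. The one genuine estimate needed is the norm comparison $\|F\|_{2,H}\le n^{3/2}\|f\|_{2,K}$, which follows from $F(g)^2\le n^2\sum_{i,j}f(a_iga_j\inv)^2$ and the identity $\sum_{Hg\in<H\ba G>}\sum_{i}f(a_igy)^2=\|f\|_{2,K}^2$, valid for each fixed $y\in G$ because $\{a_ig\}_i$ represents precisely the $K$-cosets inside $Hg$, applied to the $n$ values $y=a_j\inv$. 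Assembling these inequalities produces a polynomial of the shape $n^{5/2}P$ witnessing (RD) for $\gk$, completing the finite-index statement and hence the theorem; the remaining work is the bookkeeping of keeping support, positivity, and these three norm comparisons consistent, each step being elementary.
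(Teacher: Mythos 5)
Your proposal is correct, and its skeleton is the same as the paper's: reduce to the nested finite-index case by chaining through $H\cap K$, prove the ``small implies big'' direction by transporting everything to the $K$-level (your dictionary identity $\|F\ast\xi\|_{2,K}=\sqrt n\,\|F\ast\bar\xi\|_{2,H}$ is exactly the paper's computation $\|\tilde f\ast\tilde k\|_2^2=n^3\|f\ast k\|_2^2$), and prove the ``big implies small'' direction by dominating $f$ with an $H$-bi-invariant average and using positivity. Two details differ, and both are in your favor. First, you average only $f$ and handle $k$ through the dictionary, whereas the paper averages both ($\bar f$ and $\bar k$) and dominates $f\le\tilde{\bar f}$, $k\le\tilde{\bar k}$; your variant is leaner and yields the constant $n^{5/2}P(r)$ in place of the paper's $n^{3}\sqrt{c(n)c(n^2)}\,P(r)=n^{9/2}P(r)$. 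Second, and more substantively, your bi-invariantization $F(g)=\sum_{i,j}f(a_iga_j\inv)$ puts \emph{left}-coset representatives $a_j\inv$ on the right-hand side; this is what makes the permutation argument for right $H$-invariance work (indeed $F$ is, up to normalization, $\chi_H\ast f\ast\chi_H$ in $\H\gk$). The paper instead writes $\bar f(x)=\sum_{i,j}f(h_ixh_j)$ with right-coset representatives $h_j$ on both sides, and for an arbitrary such choice this sum need not be constant on $H$-double cosets: replacing $x$ by $xh$ replaces the $h_j$ by representatives $h_jh=k_jh_{\sigma(j)}$ whose $K$-factors $k_j$ sit in the middle of the argument of $f$, where the $K$-bi-invariance of $f$ cannot absorb them (one can build explicit failures already with $K=\langle(12)\rangle\le H=S_3\le G=S_4$ and suitable representatives). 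So on this point your write-up quietly repairs a flaw in the paper's own formula, and your version of the averaging is the one that should be used.
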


\begin{proof}
By replacing $K$ with $K\cap H$, without loss of generality, we can assume $K$ is a subgroup of $H$ of finite index $n$.

Suppose $\gk$ has (RD) \wrt $L$ and $P$ is the polynomial appearing in Definition \ref{def:rd}. Let $f\in \H_+\gh$ with $\text{supp} f\subseteq B_{r,L}\gh$ and let $k\in\r_+(H\ba G)$. We define $\tilde{k}\in \r_+(K\ba G)$ (resp. $\tilde{f} \in \H_+\gk$) by $\tilde{k}(x)=\tilde{k}(Kx):=k(Hx)=k(x)$  (resp. $\tilde{f}(x)=\tilde{f}(KxK):=f(HxH)=f(x)$) for all $x\in G$. One notes that $\tilde{f}\in B_{r,L}\gk$. On the other hand, since every right coset of $H$ is  the disjoint union of exactly $n$ right cosets of $K$, we have
\[
\|\tilde{k}\|_2^2=n \|k\|_2^2,\quad \text{and}\quad \|\tilde{f}\|_2^2=n \|f\|_2^2,
\]
where the norms are taken in $\ell^2(K\ba G)$ and $\ell^2(H\ba G)$, accordingly. If $Hx=\bigcup_{i=1}^n Kx_i$ (which implies that $k(x)=\tilde{k} (x_i)$ for all $i=1,\cdots, n$), then $yx\inv\in \bigcup_{i=1}^n yx_i\inv KH=\bigcup_{i=1}^n yx_i\inv H$, and so $Hyx\inv\subseteq \bigcup_{i=1}^n H yx_i\inv H$. This means $f(yx\inv) =\tilde{f}(yx_i\inv)$ for all $i=1,\cdots, n$. Hence, we have

\begin{eqnarray*}
\|\tilde{f}\ast\tilde{k}\|_2^2 &=& \sum_{y\in <K\ba G>}\left(\sum_{x\in <K\ba G>} \tilde{f}(y x^{-1})\tilde{k}(x)\right)^2\\
&=& \sum_{y\in <K\ba G>}\left(\sum_{x\in <H\ba G>} \left[\sum_{i=1,\, Hx=\bigcup_{i=1}^n Kx_i}^n \tilde{f}(y x_i^{-1})\tilde{k}(x_i)\right] \right)^2 \\
&=&\sum_{y\in <K\ba G>}\left(\sum_{x\in <H\ba G>} n f(yx\inv ) k (x) \right)^2 \\
&=&\sum_{y\in <K\ba G>} n^2 (f\ast k(y))^2\\
&=& n^3 \|f\ast k\|^2_2.
\end{eqnarray*}
Now, we compute $\|f\ast k\|_2^2= \frac{1}{n^3}\|\tilde{f}\ast\tilde{k}\|_2^2\leq \frac{1}{n^3}P(r)^2 \|\tilde{f}\|_2^2\|\tilde{k}\|_2^2 \leq \frac{1}{n} P(r)^2 \|f\|_2^2 \|k\|_2^2$. Thus $\gh$ has (RD) \wrt $L$.

Conversely, assume $\gh$ has (RD) \wrt $L$ and let $P$ be the polynomial in Definition \ref{def:rd}. Let $\{ h_1, \cdots, h_n\}$ be a complete set of representatives of right cosets of $K$ in $H$. For $f\in \H_+\gk$ with $\text{supp}f \subseteq B_{r,L}\gk$, define $\bar{f}\in \H_+\gh$ by $\bar{f}(x)=\bar{f}(HxH):=\sum_{i,j=1}^n f(Kh_i x h_j K)=\sum_{i,j=1}^n f(h_i x h_j)$ for all $x\in G//H$. For $m\in\n$, let $c(m)$ be the least constant for which $\left(\sum_{i=1}^m a_i\right)^2\leq c(m)
\sum_{i=1}^m a_i^2$ for all $a_i\geq 0$. One computes
\begin{eqnarray*}
\|\bar{f}\|_2^2 &=&\sum_{x\in <H\ba G>} \left( \sum_{i,j=1}^n f(h_i x h_j)  \right)^2\\
&\leq& c(n^2) \sum_{x\in <H\ba G>} \sum_{i,j=1}^n \left( f(h_i x h_j)  \right)^2\\
&\leq& n^2c(n^2) \| f\|_2^2,
\end{eqnarray*}
where the last inequality follows from the fact that every right coset of $K$ appears at least once and at most $n^2$ times in the last summation. For $k\in \r_+ (K\ba G)$, define $\bar{k}\in\r_+(H\ba G)$ by $\bar{k} (g)=\bar{k} (gH):= \sum_{i=1}^n k(Kh_ig)=\sum_{i=1}^n k(h_ig)$. A similar computation as above shows that $\|\bar{k}\|_2^2 \leq n c(n)\| k\|_2^2$. We also note that $f\leq \tilde{\bar{f}}$ and $k\leq \tilde{\bar{k}}$.
Therefore, for every $g\in <K\ba G>$, we have $f\ast k (g)=\sum_{x\in <K\ba G>}f(gx\inv)k(x) \leq \sum_{x\in <K\ba G>}\tilde{\bar{f}}(gx\inv)\tilde{\bar{k}}(x)=\tilde{\bar{f}} \ast \tilde{\bar{k}}(g)$. Hence
\begin{eqnarray*}
\|f\ast k\|_2^2&\leq & \| \tilde{\bar{f}}\ast \tilde{\bar{k}}\|_2^2\\
&= & n^{3} \|\bar{f}\ast \bar{k}\|_2^2\\
&\leq & n^{3} P(r)^2 \|\bar{f}\|_2^2\| \bar{k}\|_2^2\\
&\leq & n^6 c(n)c(n^2) P(r)^2 \|f\|_2^2\| k\|_2^2.
\end{eqnarray*}
This shows that $\gk$ has (RD) \wrt $L$ and completes the proof.
\end{proof}

The proof of this theorem is a modification of the proof of Theorem 2.11 of \cite{s1}. The only weakness of the above theorem is that it assumes the existence of the length function $L$ such that $H, K \subseteq N_L$. This difficulty can be resolved when one of the subgroups $H$ or $K$ is normal in $G$. First we need some basics about extensions of groups.

\begin{remark}
\label{r:ext}
Let $H$ be a normal subgroup of $G$ and set $Q:=G/H$. Then we can consider $G$ as the extension of $Q$ by $H$;
\begin{equation}
\label{diag:gpext}
\xymatrix{
1\ar[r]&H\ar[r]&  G\ar[r]^\pi & Q\ar[r] & 1.
}
\end{equation}
As it was explained at the beginning of Section 2 of \cite{j2}, one equips  $H\times Q$ with the multiplication defined by the formula
\begin{equation}
\label{f:extprod}
(h_1,x_1)(h_2, x_2) := (h_1\rho(x_1)(h_2)f(x_1,x_2), x_1x_2),\, \forall (h_1,x_1), (h_2, x_2)\in H\times Q,
\end{equation}
where $f$ and $\rho $ are defined as follows: First, we fix a set-theoretic cross-section $\sigma:Q\ra G$ of $\pi$ such that $\sigma(1_Q)=1_G$. Then we define $f:Q\times Q\ra H$ by $f(x_1,x_2):= \si(x_1)\si(x_2)\si(x_1x_2)\inv $. For every $x\in Q$, $\rho(x)$ is the automorphism of $H$ defined by conjugation of $\si(x)$, that is $\rho(x)(h)=\si(x) h \si(x)\inv$. We also note that $f$ and $\rho$ are related as follows:
\begin{equation}
\label{f:extrel1}
f(x_1,x_2)f(x_1x_2, x_3)=\rho(x_1)(f(x_2, x_3))f(x_1, x_2 x_3)
\end{equation}
and
\begin{equation}
\label{f:extrel2}
\rho(x_1)\rho(x_2)=Ad(f(x_1, x_2))\rho(x_1 x_2),
\end{equation}
for all $x_1, x_2, x_3 \in Q$, where $Ad(h)$ is the inner automorphism of $H$ defined by $h$ for all $h\in H$, i.e. $Ad(h)(a)=hah\inv$ for all $a\in H$, see Page 104 of \cite{brown}. The multiplication defined by \ref{f:extprod} is associative and one easily checks that the inverse of an element $(h,x)\in H\times Q $ \wrt this multiplication is given by the following formula:
\begin{equation}
\label{f:extinv}
(h,x)\inv = (\si(x)\inv h\inv \si(x\inv )\inv , x\inv).
\end{equation}
Then the map $H\times Q\ra G$ defined by $(h,x)\mapsto h\si(x)$ is a group isomorphism. Since our decomposition of $G$ depends on $\si$, we denote the group $H\times Q$ by $G_\si$.
\end{remark}

\begin{lemma}
\label{lem:length}
Let $H$ be a normal subgroup of a group $G$ and let $K$ be a subgroup of $G$ which is commensurable with $H$. If $L$ is a length function on either the Hecke pair $\gh$ or the Hecke pair $(G,K)$, then there exists a length function $L^\prime$ on $G$ equivalent with $L$ such that $N_{L^\prime}$ contains both $H$ and $K$.
\end{lemma}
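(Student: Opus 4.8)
The plan is to normalise the hypotheses, then build $L^\prime$ by collapsing the bounded subgroup generated by $H$ and $K$, and finally to fight for the reverse domination, which is where the real work lies.

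First I would upgrade ``commensurable'' to ``strongly commensurable'' using that $H$ is normal: if $g$ witnesses commensurability, so that $H\cap gKg\inv$ has finite index in $H$ and in $gKg\inv$, then conjugating by $g\inv$ and using $g\inv Hg=H$ carries $H\cap gKg\inv$ to $H\cap K$ without changing indices; hence $D:=H\cap K$ has finite index in both $H$ and $K$. Since $H\trianglelefteq G$, the set $S:=HK$ is a subgroup containing $H$ and $K$, with $D\subseteq H,K$. I would then record that $L$ is bounded on $S$: pick finite transversals $K=\bigcup_{i=1}^n Dk_i$ and $H=\bigcup_j Dh_j$; as $D$ lies in whichever of $H,K$ is contained in $N_L$, the function $L$ is constant on the relevant $D$-cosets, so $L(k)\le\max_i L(k_i)$ and $L(h)\le\max_j L(h_j)$, whence $L(hk)\le L(h)+L(k)\le M$ for a finite $M:=\sup_S L$.

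The $\gk$-case (where $K\subseteq N_L$) is then painless, and I would dispose of it separately: since $H$ is normal with $L|_H\le M$, the pulled-back quotient length $L^\prime(g):=\inf_{h\in H}L(gh)$ is a genuine length function on $G$ satisfying $L^\prime\le L\le L^\prime+M$, so $L^\prime$ is equivalent to $L$; moreover $H\subseteq N_{L^\prime}$, and $K\subseteq N_{L^\prime}$ because $L|_K=0$ forces $L^\prime(k)\le L(k)=0$. This settles that case without any of the difficulty below. The genuine problem is the $\gh$-case, where $H\subseteq N_L$ makes $L$ descend to a length function $\bar L$ on $Q:=G/H$, and one must kill the image $\bar K:=KH/H\cong K/D$, which is finite but in general \emph{not} normal, while remaining in the equivalence class of $\bar L$.

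For this I would take $L^\prime$ to be the largest length function dominated by $L$ that vanishes on $S$, namely
\[
L^\prime(g):=\inf\Big\{\sum_{i=1}^m L(t_i)\ :\ g\in S t_1 S t_2\cdots t_m S\Big\},
\]
with the convention that $g\in S$ gives the empty product and $L^\prime(g)=0$. Symmetry and $L^\prime(e)=0$ are clear; subadditivity follows by concatenating a decomposition of $g$ with one of $h$, the two adjacent $S$-factors fusing into one. Taking $m=1$ with trivial outer factors gives $L^\prime\le L$, and by construction $S\subseteq N_{L^\prime}$, i.e. $H,K\subseteq N_{L^\prime}$. The hard part will be the reverse domination $L\le aL^\prime+b$. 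From a decomposition $g\in St_1S\cdots t_mS$ one only gets $L(g)\le\sum_i L(t_i)+(m+1)M$, so everything hinges on bounding the number $m$ of factors in a near-optimal decomposition by a linear function of $\sum_i L(t_i)$ — equivalently, on ruling out that collapsing the bounded subgroup $S$ deflates lengths. This is precisely the step where boundedness of $L$ on $K$ (hence strong commensurability) is indispensable, and I expect it to be the main obstacle: one must show that each factor genuinely leaving $S$ changes $L$ by a controlled, bounded-below amount, so that $m$ cannot run away while $\sum_i L(t_i)$ stays small. I would therefore concentrate the whole effort on this no-collapse estimate, after which equivalence of $L^\prime$ and $L$, and with it the lemma, follows.
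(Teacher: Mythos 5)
Your reduction of commensurability to strong commensurability and your treatment of the $\gk$-case (where $K\subseteq N_L$) are both correct; in fact the quotient length $L'(g):=\inf_{h\in H}L(gh)$, which is subadditive precisely because $H$ is normal, is arguably cleaner than the paper's own construction for that case (the paper decomposes $G$ as $G_\si=H\times Q$ via a set-theoretic cross-section, sets $L'(h,x):=L(1_H,x)$, and compares the two lengths using the finite index $|H:H\cap N_L|$). The genuine gap is exactly where you locate it: in the $\gh$-case you never prove the reverse domination $L\leq aL'+b$, and no ``no-collapse estimate'' can supply it, because your $L'$ is simply not equivalent to $L$ in general. The problem is that your infimum allows factors from $N_L$ to be interleaved at zero cost, so $L'$ vanishes on the entire subgroup generated by $S\cup N_L$; equivalence would therefore force $L$ to be bounded on $\langle S\cup N_L\rangle$, which need not hold.

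Concretely, let $G=F\ast\z$ with $F$ a nontrivial finite group and $\z=\langle t\rangle$, let $H=\{1\}$ (normal), $K=F$ (finite, hence commensurable with $H$), and let $L(g)$ be the number of syllables from $F\setminus\{1\}$ in the normal form of $g$. This is a length function (when normal forms are multiplied, $F$-syllables merge or cancel but are never created), $H\subseteq N_L$, $N_L=\langle t\rangle$, and $S=HK=F$. For $g_n=(at)^n$ with $1\neq a\in F$ one has $L(g_n)=n$, yet $g_n\in StSt\cdots tS$ with every $t_i=t$ and $L(t)=0$, so $L'(g_n)=0$: no constants can give $L\leq aL'+b$. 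Worse, your guiding principle --- take the largest length function dominated by $L$ that vanishes on $S$ --- is itself unworkable: any length function $\ell\leq L$ with $\ell|_S=0$ satisfies $\ell(g_n)\leq n\bigl(\ell(a)+\ell(t)\bigr)=0$, hence can never be equivalent to $L$, even though the lemma is true in this example. The equivalent length function must be allowed to exceed $L$, and that is what the paper's argument produces: since $H\subseteq N_L$, $L$ descends to a length function on $G/H$; the image $HK/H$ is a finite subgroup there; and Lemma 2.1.3 of \cite{j2} (essentially a maximum over translates, $\tilde L(x)=\max\{\bar L(axb):a,b\in HK/H\}$ off the subgroup and $0$ on it, which lies above $\bar L$ rather than below it) yields an equivalent length function killing $HK/H$, which is then pulled back to $G$. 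Your setup and first case can stand; the second case must be replaced by this descend--kill--pull-back argument.
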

\begin{proof} One notes that $K$ and $H$ are strongly commensurable, because $H$ is normal in $G$. As the first case, assume that $K\subseteq N_L$. We consider the group extension (\ref{diag:gpext}) and constructions described in Remark \ref{r:ext}. Using the isomorphism $G_\si=H\times Q\ra G$, we consider $L$ as a length function over $G_\si$. We define $L^\prime:G_\si \ra [0,\infty[$ by $(h,x)\mapsto L(1_H, x)$ for all $(h,x)\in G_\si$. One easily checks that $L^\prime$ is a length function on $G_\si$. One also notes that $m:=|H:H\cap N_L|<\infty$. Let $<(H\cap N_L)\ba H>=\{ (H\cap N_L) h_1, \cdots, (H\cap N_L) h_m \}$ and set $M=\max\{L(h_i, 1_Q); 1\leq i \leq m\}$. Then, for all  $(h,x)\in G_\si$, we have $L(h,x)=L((h,1_Q)(1_H,x))\leq L(h,1_Q)+L(1_H,x)\leq M+L^\prime(h,x)$ and similarly $L^\prime(h,x)-M\leq L(h,x)$. Therefore $L$ and $L^\prime$ are equivalent. It is also clear that $H, K \subseteq N_{L^\prime}$.

As the second case, assume that $H\subseteq N_L$. By Remark 2.3 of \cite{s1}, $L$ is constant on each left coset of $H$, so it can be considered as a length function on $G/H$. Since $H$ and $K$ are strongly commensurable the quotient group $\frac{HK}{H}$ is a finite subgroup of $G/H$. It follows from Lemma 2.1.3 of \cite{j2} that there exists a length function $\tilde{L}$ on $G/H$ equivalent to $L$ such that $\frac{HK}{H}\subseteq N_{\tilde{L}}$. Define $L^\prime:G\ra [0,\infty[$ by $L'(g):=\tilde{L}(gH)$ for all $g\in G$. The function $L^\prime$ is a length function on $G$ equivalent to $L$ and $N_{L^\prime}$ contains both $H$ and $K$.
\end{proof}

\begin{corollary}
\label{cor:normal}
Let $H$ and $K$ be two commensurable subgroups of a group $G$ and let $H$ be normal in $G$. Then the quotient group $G/H$ has (RD) if and only if the Hecke pair $\gk$ has (RD).
\end{corollary}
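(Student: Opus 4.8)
The plan is to reduce everything to Theorem \ref{thm:scom} after first observing that, since $H$ is normal in $G$, the Hecke pair $(G,H)$ is nothing but the quotient group $G/H$. Indeed, when $H$ is normal every double coset $HgH$ equals the single coset $gH$, so $G//H=H\backslash G=G/H$, the Hecke algebra $\H(G,H)$ is the group algebra $\c[G/H]$, and the regular representation $\lambda$ of (\ref{f:regrep}) is the left regular representation of $G/H$ on $\ell^2(G/H)$. Moreover, a length function $L$ on the Hecke pair $(G,H)$, i.e. one with $H\subseteq N_L$, is constant on cosets of $H$ and hence descends to a length function $\bar L$ on $G/H$, and conversely every length function on $G/H$ pulls back to one on $(G,H)$. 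Under this identification the Haagerup inequality (\ref{def:rd1}) for $(G,H)$ with respect to $L$ is exactly the Haagerup inequality for $G/H$ with respect to $\bar L$. Thus $(G,H)$ has (RD) with respect to $L$ if and only if $G/H$ has (RD) with respect to $\bar L$; in particular, $G/H$ has (RD) if and only if the Hecke pair $(G,H)$ does.

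Next I would record the two structural facts that let me invoke Theorem \ref{thm:scom}. First, because $H$ is normal, commensurability of $H$ and $K$ upgrades to strong commensurability: if $H\cap gKg^{-1}$ has finite index in both $H$ and $gKg^{-1}$, conjugating this relation by $g^{-1}$ and using $g^{-1}Hg=H$ shows that $H\cap K$ has finite index in both $H$ and $K$. Second, $H$ is almost normal (each double coset being a single coset), and since $K$ is commensurable with the almost normal subgroup $H$, Remark \ref{r:comm1} shows that $K$ is almost normal as well. Hence $H$ and $K$ are strongly commensurable, almost normal subgroups of $G$, which are precisely the hypotheses of Theorem \ref{thm:scom} once a common length function vanishing on both is produced.

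For the forward direction, assume $G/H$ has (RD). Pulling back a length function realizing (RD) on $G/H$ gives, by the identification above, a length function $L$ on the Hecke pair $(G,H)$ with respect to which $(G,H)$ has (RD). Since $L$ is a length function on $(G,H)$, with $H$ normal and $K$ commensurable to $H$, Lemma \ref{lem:length} furnishes an equivalent length function $L'$ with $H,K\subseteq N_{L'}$; by Remark \ref{r:eqlen1}, $(G,H)$ still has (RD) with respect to $L'$. Now Theorem \ref{thm:scom} applies to $L'$ and yields that $(G,K)$ has (RD) with respect to $L'$, so $(G,K)$ has (RD). For the converse I would run the same argument backwards: starting from a length function $L$ on $(G,K)$ realizing (RD), Lemma \ref{lem:length} again produces an equivalent $L'$ with $H,K\subseteq N_{L'}$, Remark \ref{r:eqlen1} transfers (RD) to $L'$, Theorem \ref{thm:scom} hands (RD) with respect to $L'$ back to $(G,H)$, and the opening identification converts this into (RD) for $G/H$.

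The substantive work is entirely packaged in Theorem \ref{thm:scom} and Lemma \ref{lem:length}; the only genuinely new point is the bridging observation that a normal $H$ makes $(G,H)$ coincide with the group $G/H$ at the level of Hecke algebras and length functions. I expect the main thing to be careful about is the bookkeeping of the ``there exists a length function'' quantifiers: one must ensure that the length function produced by Lemma \ref{lem:length} is the same $L'$ fed into both sides of the equivalence in Theorem \ref{thm:scom}, so that the two applications of that theorem chain together correctly with the identification of $(G,H)$ and $G/H$.
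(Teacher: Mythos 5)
Your proposal is correct and follows exactly the paper's route: it identifies (RD) for $G/H$ with (RD) for the Hecke pair $(G,H)$, then combines Lemma \ref{lem:length} (together with Remark \ref{r:eqlen1}) with Theorem \ref{thm:scom}. The paper's own proof is just a terse statement of this same reduction; your additional checks (strong commensurability via normality of $H$, almost normality of $K$ via Remark \ref{r:comm1}, and the careful chaining of the length function $L'$ through both applications) are exactly the details the paper leaves implicit.
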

\begin{proof}
Since property (RD) of $G/H$ is the same as property (RD) of the Hecke pair $\gh$, the statement follows from Theorem \ref{thm:scom} and Lemma \ref{lem:length}.
\end{proof}

Now, using the above corollary, we are able to introduce a class of groups all whose subgroups are almost normal and every such Hecke pair has property (RD). First we need some definitions and results from group theory. Our main reference is the paper \cite{neu} by B. H. Neumann.

\begin{definition} Let $H$ be a subgroup of a group $G$. The normal closure of $H$ in $G$ is the smallest normal subgroup of $G$ containing $H$. It is denoted by $H^G$. $H$ is called nearly normal in $G$ if $|H^G:H|<\infty$.
\end{definition}
The following classes of groups are important for our discussion.
\begin{definition} Let $G$ be a group.
\begin{itemize}
\item [(i)] It is called an FD-group or a commutator-finite group if its commutator (derived) subgroup, $G^\prime$, is finite.
\item [(ii)] It is called an FIZ-group if the group of inner automorphisms of $G$, $Inn(G)=\frac{G}{Z(G)}$, is finite, where $Z(G)$ denotes the center of $G$.
\item [(iii)] It is called an FC-group if all conjugacy classes of elements of $G$ are finite or equivalently the centralizer of every element of $G$ is a subgroup of finite index.
\end{itemize}
\end{definition}

In the following theorem we summarize those results of \cite{neu} that we need for our purpose.
\begin{theorem} (B. H. Neumann).
\begin{itemize}
\item [(i)] Every FIZ-group is an FD-group and every FD-group is an FC-group.
\item [(ii)] In the class of finitely generated groups the converses of the above implications are also true. In other words, if $G$ is a finitely generated FC-group, then it is an FIZ-group.
\item [(iii)] Subgroups and quotients of an FD-group, (resp. FIZ-group), (resp. FC-group) are FD-groups, (resp. FIZ-groups), (resp. FC-groups).
\item [(iv)] A group $G$ is an FD-group if and only if every subgroup of $G$ is a nearly normal subgroup of $G$.
\end{itemize}
\end{theorem}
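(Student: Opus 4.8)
The plan is to treat the four items by reducing them to classical facts, with only item (iv) requiring real work. For (i), the implication that every FIZ-group is an FD-group is precisely Schur's theorem: if $Z(G)$ has finite index in $G$, then $G'$ is finite. The implication that every FD-group is an FC-group is an immediate computation: for $g\in G$ and any $x\in G$ one has $xgx\inv=[x,g]\,g$ with $[x,g]\in G'$, so the entire conjugacy class of $g$ lies in the coset $G'g$ and hence has at most $|G'|$ elements; thus each centralizer $C_G(g)$ has finite index and $G$ is an FC-group.

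For (ii), suppose $G$ is a finitely generated FC-group with generating set $\{g_1,\dots,g_n\}$. Since an element is central exactly when it commutes with each generator, $Z(G)=\bigcap_{i=1}^n C_G(g_i)$, a finite intersection of finite-index subgroups (each $C_G(g_i)$ has finite index because $G$ is FC); hence $Z(G)$ has finite index and $G$ is an FIZ-group. Together with (i), this closes the loop FC $\Rightarrow$ FIZ $\Rightarrow$ FD $\Rightarrow$ FC for finitely generated groups. Item (iii) is handled by routine index estimates: for FD one uses $H'\leq G'$ and $(G/N)'=G'N/N$; for FC one uses $|H:C_H(h)|=|H:H\cap C_G(h)|\leq|G:C_G(h)|$ together with the fact that conjugacy classes in $G/N$ are images of conjugacy classes in $G$; for FIZ one uses the inclusions $Z(G)\cap H\subseteq Z(H)$ and $Z(G)N/N\subseteq Z(G/N)$ with $|H:Z(G)\cap H|\leq|G:Z(G)|$ and $|G/N:Z(G)N/N|\leq|G:Z(G)|$.

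The forward direction of (iv) is again short: if $G'$ is finite and $H\leq G$, then $HG'$ is a subgroup (as $G'$ is normal) which is itself normal in $G$, since for any $g$ and $h\in H$ we have $ghg\inv=[g,h]h\in G'H=HG'$. As $HG'$ is a normal subgroup containing $H$, the normal closure satisfies $H\subseteq H^G\subseteq HG'$, whence $|H^G:H|\leq|HG':H|=|G':G'\cap H|\leq|G'|<\infty$, so $H$ is nearly normal.

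The hard part is the converse of (iv). First I would apply near-normality to cyclic subgroups to recover the FC property: for $g\in G$ the normal closure $N=\langle g\rangle^G$ contains $\langle g\rangle$ with finite index $k$, so $N$ is virtually cyclic and in particular finitely generated; each conjugate $\langle xgx\inv\rangle=x\langle g\rangle x\inv$ is again a subgroup of index $k$ in $N$ (since $xNx\inv=N$), a finitely generated group has only finitely many subgroups of a given finite index, and each such subgroup contains only finitely many conjugates of $g$, so $g$ has a finite conjugacy class. The genuine obstacle is that FC alone does not suffice---the restricted direct product $\bigoplus S_3$ is an FC-group with infinite derived subgroup---so one must upgrade to a \emph{uniform} bound on conjugacy-class sizes. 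Here near-normality of \emph{all} subgroups, not merely the cyclic ones, is essential: from a sequence of elements whose conjugacy classes grow without bound one can assemble, in the style of the $\bigoplus S_3$ example, a subgroup $H$ whose normal closure has infinite index in $H$, contradicting the hypothesis. This forces $G$ to be a BFC-group (boundedly finite conjugacy classes), at which point B. H. Neumann's theorem that a BFC-group has finite derived subgroup \cite{neu} gives that $G'$ is finite. I expect the construction of the obstructing subgroup from unbounded conjugacy classes, i.e. the passage FC $\Rightarrow$ BFC under the full near-normality hypothesis, to be the delicate step; everything else reduces to Schur's theorem, Neumann's BFC theorem, and bookkeeping with indices.
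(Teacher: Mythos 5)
First, a point of orientation: the paper does not prove this theorem at all --- it is stated explicitly as a summary of results quoted from Neumann's paper \cite{neu} --- so your reconstruction is being measured against the classical literature rather than against an argument in the text. Most of what you wrote is correct and complete. Item (i) is Schur's theorem plus the observation that the conjugacy class of $g$ lies in $G'g$; item (ii) is the standard argument $Z(G)=\bigcap_{i=1}^n C_G(g_i)$, a finite intersection of finite-index subgroups; item (iii) is routine index bookkeeping, and your inclusions and inequalities are all right; the forward half of (iv) is complete, since $HG'$ is a normal subgroup containing $H$ and $|HG':H|=|G':G'\cap H|\leq |G'|<\infty$. Your derivation of the FC property from near-normality of cyclic subgroups is also sound: the normal closure $\langle g\rangle^G$ is virtually cyclic hence finitely generated, a finitely generated group has finitely many subgroups of a given finite index, and each cyclic subgroup has finitely many generators, so the class of $g$ is finite.

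The genuine gap is the step you yourself flag: the passage from FC to a uniform bound on class sizes (BFC) under the hypothesis that \emph{all} subgroups are nearly normal. You assert that from a sequence of elements with unboundedly large conjugacy classes one can assemble, ``in the style of the $\bigoplus S_3$ example,'' a subgroup $H$ with $|H^G:H|=\infty$, but no construction is given --- and this is precisely the substantive content of Neumann's theorem; everything else in your write-up is classical boilerplate. The difficulty is real: in an abstract FC group there are no ``supports'' or commuting direct factors to imitate, so one must select infinitely many elements with rapidly growing class sizes and prove that a subgroup built from them has infinite index in its normal closure; arranging the needed independence of these elements is where essentially all of Neumann's work lies. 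Until that construction (or some alternative proof that near-normality of all subgroups forces $G'$ to be finite) is supplied, the converse of (iv) is unproven. Two smaller points: the BFC theorem you invoke (bounded conjugacy classes imply finite derived subgroup) is from Neumann's 1954 paper ``Groups covered by permutable subsets,'' not from the cited \cite{neu}; and the phrase ``a subgroup $H$ whose normal closure has infinite index in $H$'' should read ``in which $H$ has infinite index,'' since $H\subseteq H^G$.
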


%It is clear from definition and Part (iv) of the above theorem that every subgroup of an FD-group $G$ is almost normal in $G$.
The following corollary relates the above theorem to our discussion:

\begin{corollary} Let $G$ be a finitely generated FD-group (or FIZ-group, or FC-group) and let $H$ be a subgroup of $G$. Then the Hecke pair $\gh$ has (RD).
\end{corollary}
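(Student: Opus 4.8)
The plan is to realise $\gh$, via Corollary \ref{cor:normal}, as a Hecke pair governed by a quotient group, and then to verify that this quotient has (RD) by appealing to the structure theory of finitely generated FD-groups. First I would normalise the hypotheses: by parts (i) and (ii) of Neumann's theorem the three classes of finitely generated FD-, FIZ-, and FC-groups coincide, so there is no loss in assuming $G$ is a finitely generated FD-group. Since $G$ is an FD-group, part (iv) of Neumann's theorem shows that every subgroup of $G$ is nearly normal; in particular the normal closure $N:=H^G$ is normal in $G$ and satisfies $|N:H|<\infty$.

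Before invoking Corollary \ref{cor:normal}, I would check that $\gh$ is genuinely a Hecke pair, i.e. that $H$ is almost normal. For any $g\in G$ one has $gHg\inv\subseteq gNg\inv=N$, and both $H$ and $gHg\inv$ are subgroups of $N$ of the same finite index $|N:H|$; hence $|H:H\cap gHg\inv|\leq |N:H\cap gHg\inv|<\infty$, so $H$ and $gHg\inv$ are strongly commensurable for every $g$. Thus $H$ is conjugate-commensurable, that is, almost normal. The same bound, applied with $g=1_G$, shows that $N$ and $H$ are commensurable (indeed $H\leq N$ with finite index), while $N$ is normal in $G$.

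Now I would apply Corollary \ref{cor:normal} with the normal subgroup taken to be $N$ and the commensurable subgroup taken to be $H$: it yields that $\gh$ has (RD) if and only if the quotient group $G/N$ has (RD). This reduces the statement to proving that $G/N$ has (RD). Here $G/N$ is finitely generated, being a quotient of the finitely generated group $G$, and it is an FD-group by part (iii) of Neumann's theorem; hence, by parts (i) and (ii), it is an FIZ-group, so its center $Z(G/N)$ has finite index in $G/N$. Consequently $G/N$ is (finitely generated) virtually abelian, and it is weakly commensurable with the finitely generated abelian group $Z(G/N)$. Since finitely generated abelian groups have polynomial growth and hence property (RD), Theorem \ref{thm:comgroups} gives that $G/N$ has (RD), and the reduction above completes the proof.

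The only non-formal ingredient, and the step I expect to carry the real weight, is the last one: that a finitely generated virtually abelian (equivalently, polynomial growth) group has (RD). This is classical, but it is where genuine analysis enters; everything else is bookkeeping combining Neumann's structure theory with Corollary \ref{cor:normal}. A secondary point requiring care is the verification that $\gh$ is a Hecke pair at all, since the invocation of Corollary \ref{cor:normal} presupposes that both $\gh$ and the auxiliary pair $\gk$ are well defined.
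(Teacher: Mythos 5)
Your proof is correct, and its skeleton matches the paper's: both use Neumann's theorem to get $|H^G:H|<\infty$, then apply Corollary \ref{cor:normal} with the normal subgroup $H^G$ to reduce everything to showing that the finitely generated FD-group $G/H^G$ has (RD). Where you genuinely diverge is in that last step. The paper argues ``from above'': the commutator subgroup of a finitely generated FD-group is finite by definition, the abelianization is a finitely generated abelian group with (RD), and then (RD) is pulled back through the quotient map with finite kernel via Proposition 2.1.4 of \cite{j2} (or Theorem \ref{thm:scom}). You argue ``from below'': you upgrade FD to FIZ using Neumann's parts (i) and (ii), observe that the center is then a finite-index --- hence finitely generated --- abelian subgroup, and invoke invariance of (RD) under weak commensurability (Theorem \ref{thm:comgroups}, i.e.\ Jolissaint's Proposition 2.1.5 rather than 2.1.4). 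Both routes bottom out at the same classical fact that finitely generated abelian groups have (RD), so neither is deeper than the other; the paper's version is marginally more economical since it does not need the FD $\Rightarrow$ FIZ upgrade nor the (implicitly used) fact that finite-index subgroups of finitely generated groups are finitely generated, while your version has the small advantage of staying entirely inside subgroup-to-group transfer results. Your explicit verification that $H$ is almost normal (so that $\gh$ is a Hecke pair at all) is a point the paper leaves implicit; in the paper it is covered by Remark \ref{r:comm1}, since $H$ is commensurable with the normal, hence almost normal, subgroup $H^G$.
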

\begin{proof}
First we prove that every finitely generated FD-group $\gaa$ has (RD). By definition the commutator subgroup $\gaa^\prime$ of $\gaa$ is finite. On the other hand, $\gaa/\gaa^\prime$ is a finitely generated abelian group and has (RD), because it is isomorphic to a direct product of finitely many cyclic (finite or infinite) groups which all have (RD), see Example 1.2.3 of \cite{j2} and Example 2.8 of \cite{s1}. Therefore, by Proposition 2.1.4 of \cite{j2} (or by Theorem \ref{thm:scom}), $\gaa$ has (RD).

Let $H$ be a subgroup of $G$ then $G/H^G$ is a finitely generated FD-group and so it has (RD). Now, since $|H^G:H|<\infty$, $H^G$ and $H$ are strongly commensurable. Therefore, by Corollary \ref{cor:normal}, the Hecke pair $\gh$ has (RD).
\end{proof}

One can use Corollary \ref{cor:normal} to construct more multifarious examples of Hecke pairs with property (RD). We use the fact that if $A$ and $B$ are two groups, the kernel of the natural surjective homomorphism $\pi:A\ast B\ra A\times B$ is a free group generated by commutators $[a,b]$ for all $a\in A-\{1_A\}$ and $b\in B-\{1_B\}$, see Proposition 4 in Chapter 1 of \cite{serre}. Denote this kernel by $R_{A,B}$.

\begin{example}
\label{ex:examples}
Pick two groups $A$ and $B$ having property (RD). Then the quotient group $\frac{A\ast B}{R_{A,B}}\simeq A\times B$ has (RD). For any subgroup $H$ of $A\ast B$ commensurable with $R_{A,B}$, the Hecke pair $(A\ast B, H)$ has (RD). There are basically two ways to construct subgroups like $H$ commensurable with $R_{A,B}$. First, one can pick any finite subgroup of $A\times B$, say $K$, and consider its preimage in $A\ast B$. Clearly, $\pi\inv (K)$ contains $R_{A,B}$ as a finite index subgroup. Secondly, since $R_{A,B}$ is a free group, for every integer $n$, one can define a surjective homomorphism $R_{A,B}\ra \z/n\z$ by mapping one of the generators to $1\in \z/n\z$ and mapping other generators to zero. The kernel of this homomorphism is a finite index subgroup of $R_{A,B}$. One should note that the subgroups of $A\ast B$ commensurable with $R_{A,B}$, constructed in the above, are not necessary normal in $A\ast B$. Our discussion gives us a huge freedom to choose $A$ and $B$ and construct Hecke pairs with property (RD). Obviously, one can also apply our discussion to construct non-elementary Hecke pairs which do not have property (RD).
\end{example}

The rest of this section is devoted to show how one can pull back or push forward property (RD) using specific types of group homomorphisms.

\begin{proposition}
\label{prop:finker}
Let $\f:G_1\ra G_2$ be a group homomorphism whose kernel is finite. If $H_2\subseteq \f(G_1)$ is an almost normal subgroup of $G_2$ and the Hecke pair $(G_2,H_2)$ has (RD), then the Hecke pair $(G_1,\f^{-1}(H_2))$ has (RD).
\end{proposition}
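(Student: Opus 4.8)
The plan is to realize property (RD) for $(G_1,\f\inv(H_2))$ directly from the estimate on $(G_2,H_2)$, by pulling back the length function along $\f$ and transporting the test functions across the coset maps induced by $\f$. Write $N:=\ker\f$ and $H_1:=\f\inv(H_2)$; since $1_{G_2}\in H_2$, the kernel $N$ is a finite normal subgroup of $G_1$ contained in $H_1$, and $\f(H_1)=H_2$ because $H_2\subseteq\f(G_1)$. First I would confirm that $(G_1,H_1)$ is genuinely a Hecke pair. For $g\in G_1$ normality of $N$ gives $N\subseteq H_1\cap gH_1g\inv$, so the index $|H_1:H_1\cap gH_1g\inv|$ is unchanged upon passing to $H_1/N\cong H_2$; since $\f(H_1\cap gH_1g\inv)=H_2\cap\f(g)H_2\f(g)\inv$, this index equals $|H_2:H_2\cap\f(g)H_2\f(g)\inv|$, which is finite because $H_2$ is almost normal in $G_2$. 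Thus $H_1$ is almost normal and the relevant $L(g)$ on $(G_1,H_1)$ agrees with $L(\f(g))$ on $(G_2,H_2)$.

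Next I would set up the combinatorial dictionary. The maps $H_1g\mapsto H_2\f(g)$ and $H_1gH_1\mapsto H_2\f(g)H_2$ are well defined because $\f(H_1)=H_2$, and injective because $\f\inv(H_2)=H_1$ (from $\f(g')\f(g)\inv\in H_2$ one gets $g'g\inv\in H_1$); hence they are bijections $H_1\ba G_1\to H_2\ba\f(G_1)$ and $G_1//H_1\to\f(G_1)//H_2$. Let $L_2$ be a length function witnessing (RD) of $(G_2,H_2)$ with polynomial $P$, and set $L_1:=L_2\circ\f$, which is a length function on $G_1$ with $H_1\subseteq N_{L_1}$.

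Given $f\in\H_+(G_1,H_1)$ with $\mathrm{supp}\,f\subseteq B_{r,L_1}(G_1,H_1)$ and $k\in\r_+(H_1\ba G_1)$, I would define $\tilde f\in\H_+(G_2,H_2)$ and $\tilde k\in\r_+(H_2\ba G_2)$ by transporting $f$ and $k$ across the above bijections and extending by zero off $\f(G_1)$. The bijections immediately give $\|\tilde f\|_2=\|f\|_2$, $\|\tilde k\|_2=\|k\|_2$, and $\mathrm{supp}\,\tilde f\subseteq B_{r,L_2}(G_2,H_2)$. The key computation is that $\f$ intertwines the two convolutions: reindexing the defining sum by $x=\f(\cdot)$ and using $\tilde f(\f(gx\inv))=f(gx\inv)$ and $\tilde k(\f(x))=k(x)$ yields $(\tilde f\ast\tilde k)(\f(g))=(f\ast k)(g)$ for all $g\in G_1$, while $\tilde f\ast\tilde k$ vanishes off $\f(G_1)$ since $\f(G_1)$ is a subgroup and both factors are supported in it. Hence $\|f\ast k\|_2=\|\tilde f\ast\tilde k\|_2$, and Definition \ref{def:rd}(ii) applied on $(G_2,H_2)$ gives $\|f\ast k\|_2\leq P(r)\|\tilde f\|_2\|\tilde k\|_2=P(r)\|f\|_2\|k\|_2$. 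This is exactly (RD) for $(G_1,H_1)$ \wrt $L_1$, with the same polynomial $P$.

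I expect the genuine work to lie in two places. The first is the preliminary claim that $\f\inv(H_2)$ is almost normal, so that $(G_1,\f\inv(H_2))$ is a Hecke pair at all; the computation here invokes that $\ker\f$ is normal in $G_1$ and contained in $\f\inv(H_2)$ (its finiteness being the standing hypothesis). The second, and the main obstacle, is that $\f$ need not be surjective: one must check that extending $\tilde f,\tilde k$ by zero off $\f(G_1)$ introduces no spurious convolution mass. This is the reason for verifying that $\f(G_1)$ is a subgroup containing $H_2$, which forces $\tilde f\ast\tilde k$ to remain supported in $\f(G_1)$ and makes $\|f\ast k\|_2=\|\tilde f\ast\tilde k\|_2$ hold exactly; the rest is routine bookkeeping of $\ell^2$-norms and support radii across the coset bijections.
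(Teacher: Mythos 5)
Your proposal is correct and follows essentially the same route as the paper: pull back the length function via $L_1:=L_2\circ\f$, push the test functions forward across the coset bijections (extending by zero off $\f(G_1)$), observe that the $\ell^2$-norms, supports, and convolutions are preserved exactly because $\ker\f\subseteq\f\inv(H_2)$ and cosets outside $\f(G_1)$ carry no mass, and then invoke Definition \ref{def:rd}(ii) for $(G_2,H_2)$. The only cosmetic difference is that you verify almost normality of $\f\inv(H_2)$ directly via the index identity $|H_1:H_1\cap gH_1g\inv|=|H_2:H_2\cap\f(g)H_2\f(g)\inv|$, whereas the paper cites this fact from Tzanev.
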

\begin{proof} It was shown in Page 170 of \cite{tzanev} that $H_1:=\f^{-1}(H_2)$ is almost normal in $G_1$. Assume $(G_2,H_2)$ has (RD) \wrt a length function $L_2$ and $P$ is the polynomial in Definition \ref{def:rd}. Define $L_1:=L_2 \f$. Then $L_1$ is a length function on the Hecke pair $(G_1,H_1)$. For $f_1\in \H_+(G_1,H_1)$ with $\text{supp}f_1\subseteq B_{r,L_1}(G_1,H_1)$ and $k_1\in \r_+(H_1\ba G_1)$, define $f_2\in \H_+(G_2,H_2)$ by $f_2(x):=f_1(y)$ if there exists some $y\in G_1$ such that $x=\f(y)$ and otherwise define $f_2(x):=0$. Similarly define $k_2\in \r_+(H_2\ba G_2)$. Since $Ker(\f)\subseteq H_1$, the definition of $f_2(x)$ and $k_2(x)$ does not depend on the pre-image of $x$ and they are well-defined. For instance,  we check the invariance of $f_2$ under multiplication of elements of $H_2$ from left. For $h_2\in H_2$, choose $h_1\in H_1$ such that $\f(h_1)=h_2$. Then, for all $x\in \f(G_1)$, if $h_2x =\f(y)$, then $x=\f(h_1^{-1}y)$ and so $f_2(h_2x)=f_1(y)=f_1(h_1^{-1}y)=f_2(x)$ as claimed. It is also easy to see that $\text{supp}f_2\subseteq B_{r,L_2}(G_2,H_2)$. We compute $\|f_2\|_2^2=\sum_{x\in <H_2\ba G_2>} (f_2(x))^2=\sum_{y\in <H_1\ba G_1>} (f_1(y))^2=\|f_1\|_2^2$ and similarly $\|k_2\|_2^2=\|k_1\|_2^2$. If $x=\f(y)$ and $s=\f(t)$, then $sx^{-1}=\f(ty^{-1})$. Thus we have
\begin{eqnarray*}
\|f_2\ast k_2\|_2^2&=&\sum_{s\in <H_2\ba G_2>} \left( \sum _{x\in <H_2\ba G_2>} f_2(s x\inv)k_2(x) \right)^2\\
&=&\sum_{t\in <H_1\ba G_1>} \left( \sum_{y\in <H_1\ba G_1>} f_1(t y\inv)k_1(y) \right)^2\\
&=&\|f_1\ast k_1\|_2^2.
\end{eqnarray*}
The above computations are based on the fact that a right coset of $H_2$ either has a pre-image which has to be unique or it has no pre-image and in the latter case, $f_2$ and $k_2$ at this right coset have to be zero. From these equalities we conclude that $\|f_1\ast k_1\|_2^2\leq P(r) \|f_1\|_2^2\|k_1\|_2^2$, and so $(G_1,H_1)$ has (RD) \wrt $L_1$.
\end{proof}

Two immediate corollaries of the above proposition are as follows:

\begin{corollary}
\label{cor:subHecke}
Let $H$ be an almost normal subgroup of a group $G$ and let $\Gamma$ be a subgroup of $G$ containing $H$. If the Hecke pair $\gh$ has (RD), then the Hecke pair $(\Gamma, H)$ has (RD).
\end{corollary}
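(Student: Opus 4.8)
The plan is to apply Proposition \ref{prop:finker} to the inclusion homomorphism. I would set $G_1 := \Gamma$, $G_2 := G$, and let $\f := \iota : \Gamma \hookrightarrow G$ be the inclusion map. Since $\iota$ is injective, its kernel is the trivial subgroup, which is certainly finite, so the finite-kernel hypothesis of the proposition is satisfied.

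Next I would check the remaining hypotheses with the choice $H_2 := H$. By assumption $H$ is almost normal in $G = G_2$, and since $H \subseteq \Gamma$ we have $H_2 = H \subseteq \Gamma = \iota(\Gamma) = \f(G_1)$, so the containment $H_2 \subseteq \f(G_1)$ holds. The hypothesis that $(G_2, H_2) = \gh$ has (RD) is exactly the assumption of the corollary.

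It then remains only to identify the preimage. Because $H \subseteq \Gamma$, we have $\f^{-1}(H_2) = \iota^{-1}(H) = \Gamma \cap H = H$. Proposition \ref{prop:finker} therefore yields that the Hecke pair $(G_1, \f^{-1}(H_2)) = (\Gamma, H)$ has (RD); I would note that the cited proposition simultaneously guarantees that $\f^{-1}(H)$ is almost normal in $G_1$, so $(\Gamma, H)$ is genuinely a Hecke pair and the statement makes sense. There is essentially no obstacle here: the entire content is the observation that an inclusion is an injective, hence finite-kernel, homomorphism under which the preimage of $H$ is again $H$. The only point deserving a moment's care is verifying the well-definedness and containment conditions required by Proposition \ref{prop:finker}, all of which are immediate in this setting.
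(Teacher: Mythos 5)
Your proposal is correct and coincides with the paper's own (implicit) argument: the paper states this corollary as an immediate consequence of Proposition \ref{prop:finker}, applied exactly as you do, to the inclusion $\iota:\Gamma\hookrightarrow G$, whose kernel is trivial and for which $\iota^{-1}(H)=H$ since $H\subseteq\Gamma$. Your verification of the hypotheses ($H\subseteq\iota(\Gamma)$, almost normality of $H$ in $G$, and property (RD) of $\gh$) is precisely what makes the application legitimate, so nothing is missing.
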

One notes that the above corollary is a generalization of Proposition 2.1.1 of \cite{j2}.
\begin{corollary}
\label{cor:conjugate}
Let $H$ and $K$ be two almost normal subgroups of a group $G$ which are conjugate, namely there exists $g\in G$ such that $K=gHg^{-1}$. Then the Hecke pair $\gh$ has (RD) if and only if the Hecke pair $\gk$ has (RD).
\end{corollary}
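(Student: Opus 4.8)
The plan is to derive this equivalence directly from Proposition \ref{prop:finker} by feeding it the inner automorphism of $G$ that carries one subgroup onto the other. Since $K=gHg\inv$, conjugation by $g\inv$, that is the map $\f:G\ra G$ defined by $\f(x):=g\inv x g$, is an automorphism of $G$; in particular it is a group homomorphism whose kernel is trivial and hence finite, so the standing hypothesis of Proposition \ref{prop:finker} is satisfied. Because $\f$ is surjective we have $\f(G)=G$, so every subgroup of $G$ automatically lies in the image of $\f$ and the image condition on the target subgroup is free.

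The one computation to record is the preimage: I would check that $\f\inv(H)=\{x\in G; g\inv x g\in H\}=gHg\inv=K$. Assuming $\gh$ has (RD), Proposition \ref{prop:finker} applied with $G_1=G_2=G$, the homomorphism $\f$, and $H_2=H$ then yields that $(G_1,\f\inv(H_2))=\gk$ has (RD). Concretely, if $\gh$ has (RD) \wrt a length function $L$, the proposition produces (RD) for $\gk$ \wrt the pulled-back length function $L\circ\f$; this is harmless because the statement only asks for (RD) \wrt some length function. For the reverse implication I would exchange the roles of $H$ and $K$: conjugation by $g$, namely the automorphism $\f^\prime(x):=gxg\inv$, satisfies $(\f^\prime)\inv(K)=g\inv K g=H$. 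Applying Proposition \ref{prop:finker} with $H_2=K$ shows that $\gh$ has (RD) whenever $\gk$ does, which completes the equivalence.

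I do not expect a genuine obstacle here: the entire content is the remark that an inner automorphism is an isomorphism of Hecke pairs and that Proposition \ref{prop:finker} already transports property (RD) along homomorphisms with finite kernel. The only point requiring care is orienting the conjugations correctly, using $g\inv$ in one direction and $g$ in the other, so that the preimage of the target subgroup comes out to be exactly the source subgroup rather than some other conjugate of it.
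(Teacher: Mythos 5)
Your proof is correct and is exactly the paper's intended argument: the paper presents this corollary as an immediate consequence of Proposition \ref{prop:finker}, obtained precisely by applying that proposition to the inner automorphisms $x\mapsto g\inv xg$ and $x\mapsto gxg\inv$, whose kernels are trivial and whose relevant preimages are $K$ and $H$ respectively, just as you computed.
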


The invariance of property (RD) of Hecke pairs under commensurability of subgroups follows immediately from the above corollary and Theorem \ref{thm:scom}.

\begin{corollary}
\label{cor:comconj}
Let $H$ and $K$ be two almost normal subgroups of a group $G$. Assume there exists $g\in G$ such that $gKg^{-1}$ is strongly commensurable with $H$. If there is a length function $L$ such that $ gKg^{-1}\cup H\subseteq N_L$, then the Hecke pair $\gh$ has (RD) \wrt $L$ if and only if $\gk$ has (RD) \wrt $L$.
\end{corollary}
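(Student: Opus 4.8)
The plan is to factor the passage from $\gk$ to $\gh$ through the conjugate subgroup $K':=gKg\inv$, handling the conjugation step by Corollary \ref{cor:conjugate} (equivalently Proposition \ref{prop:finker}) and the strong commensurability step by Theorem \ref{thm:scom}. This is exactly the combination announced in the sentence preceding the statement, so the genuine work lies in the bookkeeping of length functions rather than in any new estimate.

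First I would record the prerequisites. Since $K$ is almost normal and $K'=gKg\inv$ is a conjugate of $K$, the subgroups $K$ and $K'$ are commensurable, so Remark \ref{r:comm1} guarantees that $K'$ is again almost normal in $G$. Thus $H$ and $K'$ are two almost normal subgroups of $G$ which are strongly commensurable by hypothesis, and both lie in $N_L$, since the assumption reads $K'\cup H\subseteq N_L$. This is precisely the input of Theorem \ref{thm:scom}, which then gives that $\gh$ has (RD) \wrt $L$ if and only if $(G,K')$ has (RD) \wrt $L$.

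It remains to compare $(G,K')$ with $\gk$ through the inner automorphism $c_g(x):=gxg\inv$, whose kernel is trivial. The delicate point is that the hypothesis controls $L$ on $K'=gKg\inv$ but not on $K$ itself, so $L$ need not be a length function on $\gk$. What the conjugation transports is $\tilde{L}:=L\circ c_g$, i.e. $\tilde{L}(x)=L(gxg\inv)$; for $k\in K$ one has $gkg\inv\in K'\subseteq N_L$, whence $\tilde{L}(k)=0$, so $\tilde{L}$ genuinely is a length function on $\gk$. Applying Proposition \ref{prop:finker} with $\f=c_g$ and $H_2=K'$ (so that $\f\inv(K')=K$) shows that (RD) of $(G,K')$ \wrt $L$ implies (RD) of $\gk$ \wrt $\tilde L$; applying it with $\f=c_{g\inv}$ and $H_2=K$ (so that $\f\inv(K)=K'$) gives the reverse implication, since $\tilde L\circ c_{g\inv}=L$. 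Hence $(G,K')$ has (RD) \wrt $L$ if and only if $\gk$ has (RD) \wrt $\tilde L$.

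Finally I would close the loop by noting that $\tilde L$ and $L$ are equivalent: conjugation by the fixed element $g$ changes lengths only by the additive constant $2L(g)$, because $L(gxg\inv)\leq L(x)+2L(g)$ and symmetrically $L(x)\leq L(gxg\inv)+2L(g)$. By Remark \ref{r:eqlen1} property (RD) is insensitive to passing between equivalent length functions, so (RD) \wrt $\tilde L$ coincides with (RD) \wrt $L$ on $\gk$. Chaining the two equivalences yields the asserted biconditional. I expect the only real obstacle to be this length-function bookkeeping, namely ensuring that at each step the length function used is admissible on the Hecke pair in question and that its equivalence class is preserved, since all the analytic content, the Haagerup-type estimates, is already packaged inside Theorem \ref{thm:scom} and Proposition \ref{prop:finker}.
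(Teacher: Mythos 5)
Your proof is correct and follows exactly the route the paper intends: the paper gives no separate argument for this corollary beyond the preceding remark that it "follows immediately" from Corollary \ref{cor:conjugate} (itself an instance of Proposition \ref{prop:finker}) and Theorem \ref{thm:scom}, which is precisely your factorization through $gKg^{-1}$. If anything, your explicit bookkeeping with $\tilde L = L\circ c_g$ and Remark \ref{r:eqlen1} is more careful than the paper's own statement, which glosses over the point you flag, namely that $L$ itself need not vanish on $K$ and hence is only admissible on $(G,K)$ up to equivalence.
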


The following proposition is some how the dual of Proposition \ref{prop:finker}.

\begin{proposition}
\label{prop:surj}
Let $\f:G_1\ra G_2$ be a surjective group homomorphism and let $H_1$ be an almost normal subgroup of $G_1$ containing $Ker\f$. If the Hecke pair $(G_1,H_1)$ has (RD), then $(G_2,\f(H_1))$ is a Hecke pair with property (RD).
\end{proposition}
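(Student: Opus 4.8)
The plan is to treat this proposition as the formal dual of Proposition \ref{prop:finker}: there we pulled the Hecke structure back along a finite-kernel map and pushed the test functions forward, whereas here I will push the Hecke structure forward along the surjection $\f$ and pull the test functions back. The hypothesis doing all the work is $Ker\f\subseteq H_1$. I would first record its basic consequence: writing $H_2:=\f(H_1)$, the map $\f$ induces bijections
\[
H_1\ba G_1 \ra H_2\ba G_2, \qquad G_1//H_1 \ra G_2//H_2,
\]
sending $H_1g_1$ to $H_2\f(g_1)$ and $H_1g_1H_1$ to $H_2\f(g_1)H_2$. Well-definedness and surjectivity are immediate from surjectivity of $\f$ and $\f(H_1)=H_2$; injectivity uses $Ker\f\subseteq H_1$, since $\f(g_1)\f(g_1')\inv\in\f(H_1)$ forces $g_1(g_1')\inv\in H_1\cdot Ker\f=H_1$.

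Next I would verify that $H_2$ is almost normal in $G_2$, so that $(G_2,H_2)$ is genuinely a Hecke pair. Decomposing a double coset $H_1g_1H_1=\bigcup_{i=1}^{L(g_1)}H_1g_1h_i$ into right cosets and applying $\f$ gives $H_2\f(g_1)H_2=\bigcup_i H_2\f(g_1)\f(h_i)$, and the same argument with $Ker\f\subseteq H_1$ shows these right cosets remain distinct. Hence $L(\f(g_1))=L(g_1)<\infty$, so $H_2$ is almost normal and the right-coset counts are preserved by $\f$.

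Assuming $(G_1,H_1)$ has (RD) \wrt a length function $L_1$, I would push $L_1$ forward by setting $L_2(g_2):=L_1(g_1)$ for any $g_1$ with $\f(g_1)=g_2$. This is well-defined precisely because $Ker\f\subseteq H_1\subseteq N_{L_1}$: two preimages differ by an element of $Ker\f$, on which $L_1$ vanishes, so subadditivity of $L_1$ forces equal values. One then checks directly that $L_2$ is a length function on $G_2$ with $H_2\subseteq N_{L_2}$, and that $L_1(g_1)\leq r$ holds if and only if $L_2(\f(g_1))\leq r$, so the balls $B_{r,L}$ correspond under the bijections above.

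Finally, given $f_2\in\H_+(G_2,H_2)$ with $\text{supp}f_2\subseteq B_{r,L_2}(G_2,H_2)$ and $k_2\in\r_+(H_2\ba G_2)$, I would pull them back to $f_1:=f_2\circ\f$ and $k_1:=k_2\circ\f$. The coset bijections give $\|f_1\|_2=\|f_2\|_2$, $\|k_1\|_2=\|k_2\|_2$, and $\text{supp}f_1\subseteq B_{r,L_1}(G_1,H_1)$; and since $\f(g_1x_1\inv)=\f(g_1)\f(x_1)\inv$, reindexing the convolution sum by the bijection $x_1\mapsto\f(x_1)$ yields $(f_1\ast k_1)(g_1)=(f_2\ast k_2)(\f(g_1))$, whence $\|f_1\ast k_1\|_2=\|f_2\ast k_2\|_2$. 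Invoking the (RD) inequality of Definition \ref{def:rd}(ii) for $(G_1,H_1)$ then gives $\|f_2\ast k_2\|_2\leq P(r)\|f_2\|_2\|k_2\|_2$ with the same polynomial $P$, which is exactly (RD) for $(G_2,H_2)$. The only place demanding genuine care — and thus the main obstacle — is confirming that each correspondence (cosets, the coset counts, the length function, and the convolution) is compatible, every such compatibility reducing to the single identity $H_1\cdot Ker\f=H_1$. Because these are honest bijections rather than finite-to-one maps, all the norm relations are exact equalities, so no extra constants are introduced and the polynomial $P$ is literally unchanged.
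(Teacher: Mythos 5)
Your proposal is correct and follows essentially the same route as the paper's proof: push the length function forward along $\f$ (well-defined because $Ker\f\subseteq H_1\subseteq N_{L_1}$), pull the test functions back as $f_1=f_2\circ\f$ and $k_1=k_2\circ\f$, and use the induced bijections on right cosets and double cosets to get exact norm equalities, so the same polynomial $P$ works. The only difference is that you verify almost normality of $\f(H_1)$ directly from the coset bijection, where the paper simply cites Tzanev; this is a welcome filling-in of a detail, not a different argument.
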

\begin{proof} Set $H_2:=\f(H_1)$. It is shown in page 170 of \cite{tzanev} that $(G_2,H_2)$ is a Hecke pair. Assume that $(G_1,H_1)$ has (RD) \wrt $L_1$ and $P$ is the polynomial in Definition \ref{def:rd}. For given $ y\in G_2$, let $x$ be an element of $G_1$ such that $y=\f(x)$ and define $L_2(y):=L_1(x)$. Since $Ker\f\subseteq H_1\subseteq N_{L_1}$, the function $L_2:G_2\ra [0,\infty[$ is well-defined. Indeed, it is a length function on the Hecke pair $(G_2,H_2)$. For $f_2\in \H_+(G_2,H_2)$ with $\text{supp}f_2\subseteq B_{r,L_2}(G_2,H_2)$ and $k_2\in \r_+(H_2\ba G_2)$, we define $f_1:=f_2 \f$ and  $k_1:=k_2 \f$. It follows from the definition that $f_1\in \H_+(G_1,H_1)$ with $\text{supp}f_1\subseteq B_{r,L_1}(G_1,H_1)$ and $k_1\in \r_+(H_1\ba G_1)$. In fact, $\f$ induces a bijection between the set of right cosets and a bijection between the set of double cosets of the Hecke pairs $(G_1,H_1)$ and $(G_2,H_2)$. It follows from these bijections that $\|f_2\|_2^2=\|f_1\|_2^2$, $\|k_2\|_2^2=\|k_1\|_2^2$ and $\|f_1\ast k_2\|_2^2=\|f_1\ast k_1\|_2^2$. The rest of the proof is straightforward.
\end{proof}

We note that Propositions \ref{prop:finker} and \ref{prop:surj} together are a generalization of Proposition 2.1.4 of \cite{j2}. Note that the condition $Ker\f\subseteq H$ is necessary in the above proposition, as it is apparent from the proof. Otherwise, since every finitely generated group is the quotient of a free group of finite rank, it would have (RD). However the condition $\f$ being surjective can be relaxed in some cases that will be discussed in the following section.

%%%%%%%%%%%%%%%%%%%%%%%%%%%%%%%%%%%%%%%%%%%%%%%%%%     SECTION        %%%%%%%%%%%%%%%%%%%%%%%%%%%%%%%%%%%%%%%%%%%%%%%%
\section{Property (RD) and extensions of groups by Hecke pairs}
\label{sec:RDext}

In this section we consider a group extension
\begin{equation}
\label{diag:gpext2}
\xymatrix{
1\ar[r]&G \ar[r]&  E \ar[r]^\pi & \gaa \ar[r] & 1
}
\end{equation}
and an almost normal subgroup $H$ of $G$ and study when $H$ is an almost normal subgroup of $E$ and when property (RD) of the Hecke pair $(E,H)$ follows from property (RD) of the Hecke pair $\gh$ and the group $\gaa$.

\begin{definition}
\label{def:conext} In the above situation, we call a set-theoretic cross-section $\si: \gaa\ra E$ consistent with the Hecke pair $\gh$, if $\rho(\ga)(H)=\si(\ga) H\si(\ga)\inv=H$ for all $\ga\in \gaa$. We also call the group extension \ref{diag:gpext2} consistent with the Hecke pair $\gh$ if there is a set-theoretic cross-section $\si:\gaa\ra E$ consistent with $\gh$.
\end{definition}

\begin{remark}
\begin{itemize}
\item [(i)] The group extension (\ref{diag:gpext2}) is split, namely there exist a homomorphism $s:\gaa\ra E$ such that $s$ is a cross-section if and only if $E$ is semidirect product of $G$ and $\gaa$. In this case, the condition $s(\ga)Hs(\ga)\inv\subseteq H$ for all $\ga\in \gaa$ implies that $s$ is a cross-section consistent with the Hecke pair $\gh$.
\item [(ii)] If a cross-section $\si$ is consistent with the Hecke pair $\gh$, then the number, $L(\si(\ga))$, of distinct left cosets of $H$ in $H\si(\ga)H$ equals 1 for all $\ga\in \gaa$.
\end{itemize}
\end{remark}

\begin{lemma}
\label{lem:conext}
Assume (\ref{diag:gpext2}) is a group extension consistent with the Hecke pair $\gh$ and $\si$ is the consistent cross-section described in the above. Let $f$ and $\rho$ be as defined in Remark \ref{r:ext}.
\begin{itemize}
\item [(i)] The pair $(E,H)$ is a Hecke pair.
\item [(ii)] If $H$ is normal in $G$, then it is normal in $E$ too.
\item [(iii)] For every $\beta , \ga\in \gaa$, the map $\theta_{\ga,\beta}:<G/H>\ra <G/H>$ defined by $g\mapsto \si(\ga)\si(\beta)\inv g \si(\ga \beta\inv)\inv$ for all $g\in G$ is a well-defined bijective map.
\item [(iv)] The map $(Hg, \ga)\mapsto H(g,\ga)$ is a bijection between the sets $<H\ba G > \times \gaa$ and $<H\ba (G\times\gaa)>=<H\ba E_\si>$, where the multiplication between elements of $G$ and elements of $\gaa$ in both sides are as in $E_\si$.
\end{itemize}
\end{lemma}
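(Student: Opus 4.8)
The plan is to prove the four parts of Lemma~\ref{lem:conext} by exploiting the concrete realization of the extension $E$ as $E_\si = G \times \gaa$ with the twisted multiplication~(\ref{f:extprod}), where here the r\^oles of $H$ and $Q$ in Remark~\ref{r:ext} are played by $G$ and $\gaa$ respectively. The consistency hypothesis $\si(\ga)H\si(\ga)\inv = H$ (equivalently $\rho(\ga)(H)=H$ for all $\ga$) is the key structural fact that everything hinges on. Throughout I would identify $g \in G$ with $(g, 1_\gaa) \in E_\si$ and $\si(\ga)$ with $(1_G, \ga)$, so that a general element is $(g,\ga) = g\,\si(\ga)$.

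For part (i), I would first observe that since $\rho(\ga)(H)=H$ for each $\ga$, conjugation by $\si(\ga)$ gives an automorphism of $H$; I need to show each double coset $H(g,\ga)H$ in $E$ is a finite union of left cosets of $H$. The strategy is to reduce a double coset in $E$ to a double coset in $G$: writing $(g,\ga) = g\,\si(\ga)$ and using $H\si(\ga) = \si(\ga)\rho(\ga)\inv(H) = \si(\ga)H$ (from consistency), one gets $H(g,\ga)H = Hg\si(\ga)H = Hg H \si(\ga)$, and then $L_{E}((g,\ga)) = L_{G}(g)$, which is finite because $H$ is almost normal in $G$. Part~(ii) is the cleanest: if $H \triangleleft G$ then $H$ is already normalized by all of $G$, and consistency says it is normalized by every $\si(\ga)$; since $G$ and $\{\si(\ga)\}$ together generate $E$, normality in $E$ follows, and one should check this respects the twisted product, i.e.\ that $(g,\ga)H(g,\ga)\inv \subseteq H$ using~(\ref{f:extprod}) and~(\ref{f:extinv}).

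For part (iii), the map $\theta_{\ga,\beta}(g) = \si(\ga)\si(\beta)\inv g\,\si(\ga\beta\inv)\inv$ should be shown well-defined on $<G/H>$ and bijective. Well-definedness means that if $g' = gh$ for $h\in H$, then $\theta_{\ga,\beta}(g')$ and $\theta_{\ga,\beta}(g)$ lie in the same left coset $gH$-image; this follows because $\si(\ga)\si(\beta)\inv H \si(\beta)\si(\ga)\inv = H$ by consistency (the inner automorphisms $\rho(\ga)\rho(\beta)\inv$ preserve $H$, using relation~(\ref{f:extrel2}) to absorb the cocycle $f$). Bijectivity I would get by writing down the inverse map explicitly, $\theta_{\ga,\beta}\inv = \theta_{\beta,\ga}$ up to the cocycle corrections, and verifying the composition is the identity on left cosets. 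Part~(iv) is essentially a bookkeeping statement: since $E_\si = G \times \gaa$ as a set and $H \subseteq G \times \{1_\gaa\}$, a right coset $H(g,\ga)$ is determined by the pair $(Hg, \ga)$; I would check that $H(g,\ga) = H(g',\ga')$ forces $\ga=\ga'$ (by projecting via $\pi$) and then $Hg = Hg'$.

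The main obstacle will be part~(iii): keeping careful track of how the cocycle $f(x_1,x_2)$ intervenes when one conjugates and composes, and confirming that all the $f$-factors land inside $H$ so that they disappear at the level of left cosets $<G/H>$. The identities~(\ref{f:extrel1}) and~(\ref{f:extrel2}) are exactly what is needed to reorganize these factors, but verifying that $\theta_{\ga,\beta}$ is independent of the choice of coset representative requires showing the relevant inner automorphism of $G$ preserves $H$, which again reduces to the consistency condition combined with $f$ taking values in $H$. Parts (i), (ii), and (iv) I expect to be routine once the $H\si(\ga)=\si(\ga)H$ identity is in hand.
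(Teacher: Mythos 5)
Your treatment of parts (i), (ii) and (iv) is sound, and for (i) it is in fact a little sharper than the paper's: from $H\si(\ga)=\si(\ga)H$ (consistency) you get $H g\si(\ga)H=HgH\si(\ga)$, hence the number of left cosets of $H$ in $H(g,\ga)H$ \emph{equals} the number in $HgH$; the paper instead proves the containment $H\cap gHg\inv\subseteq H\cap (g,\ga)H(g,\ga)\inv$ in $E_\si$ and deduces the index bound $|H:H\cap (g,\ga)H(g,\ga)\inv|\leq |H:H\cap gHg\inv|<\infty$, from which its part (ii) also follows at once. Either route is fine.

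Part (iii), however, contains a genuine gap, located exactly where you predicted the difficulty. In this section the cocycle is a map $f:\gaa\times\gaa\ra G$: its values lie in $G=\ker\pi$, \emph{not} in $H$, and consistency of the extension with $\gh$ does not put them in $H$. So the step you describe as the crux --- ``confirming that all the $f$-factors land inside $H$ so that they disappear at the level of left cosets'' --- is a confirmation that will fail, and your bijectivity argument stalls there. The correct mechanism, which is the paper's proof, is to factor $\theta_{\ga,\beta}(g)=a\cdot\rho(\ga\beta\inv)(g)$ with $a:=\si(\ga)\si(\beta)\inv\si(\ga\beta\inv)\inv$; this $a$ is killed by $\pi$, hence lies in $G$, and $\theta_{\ga,\beta}$ is then the composition of the bijection of $<G/H>$ induced by the automorphism $\rho(\ga\beta\inv)$ (consistency gives $\rho(\ga\beta\inv)(gH)=\rho(\ga\beta\inv)(g)H$) with left translation by $a$, which permutes the left cosets of $H$ for \emph{any} $a\in G$. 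The $f$-factors never need to disappear; they only need to act by left translation. Two further corrections to your plan: first, for well-definedness, replacing $g$ by $gh$ ($h\in H$) inserts $h$ next to $\si(\ga\beta\inv)\inv$ on the \emph{right}, so the identity actually needed is $\rho(\ga\beta\inv)(H)=H$ (giving $\theta_{\ga,\beta}(gh)=\theta_{\ga,\beta}(g)\,\rho(\ga\beta\inv)(h)$), not your identity $\si(\ga)\si(\beta)\inv H\si(\beta)\si(\ga)\inv=H$, which is true but irrelevant here. Second, if you insist on inverting via $\theta_{\beta,\ga}$, note that $\theta_{\beta,\ga}\circ\theta_{\ga,\beta}$ is \emph{not} the identity on $<G/H>$: it is right translation by $f(\beta\ga\inv,\ga\beta\inv)\inv$, which is well defined on left cosets only because (\ref{f:extrel2}) together with consistency forces that particular cocycle value to normalize $H$ --- its membership in $H$ is again neither available nor needed --- and one must then argue separately that such a composite being bijective yields bijectivity of $\theta_{\ga,\beta}$. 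The paper's factorization avoids all of this bookkeeping.
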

\begin{proof}
\begin{itemize}
\item [(i)] Let $g\in G$ and $h\in H\cap gHg\inv$. Then there exists $h_1\in H$ such that $hg=gh_1$. For given $\ga\in \gaa$, set $h_2:= \si(\ga)\inv h_1\si(\ga)$. Then $h_2\in H$ and we have $(g,\ga)(h_2,1_\gaa)=(g\si(\ga)h_2 \si(\ga)\inv ,\ga)=(gh_1, \ga)=(hg,\ga)= (h,1_\gaa) (g,\ga)$. This shows that $h\in (g,\ga)H(g,\ga)\inv$, so $H\cap gHg\inv\subseteq H\cap (g,\ga)H(g,\ga)\inv$. Hence
    \begin{equation}
    \label{f:inequalities}
    |H: H\cap (g,\ga)H(g,\ga)\inv|\leq |H: H\cap gHg\inv|<\infty.
    \end{equation}
    Since this is true for every $g\in G$ and $\ga\in \gaa$, $H$ is almost normal in $E_\si$ and so is in $E$.
\item [(ii)] It is clear from (\ref{f:inequalities}).
\item [(iii)] One easily computes
\[
\theta_{\ga,\beta} (g)= \si(\ga)\si(\beta)\inv\si(\ga \beta\inv)\inv \rho(\ga\beta\inv)(g).
\]
Since $\rho(x)$ is an automorphism on $G$ for all $x\in \gaa$ and the group extension is consistent with the Hecke pair, the map defined by $g\mapsto\rho(\ga\beta\inv)(g)$ is a bijective map from $<G/H>$ onto $<G/H>$. On the other hand, the left multiplication of an element (here $\si(\ga)\si(\beta)\inv\si(\ga \beta\inv)\inv $\,) of $G$ in left cosets is a permutation of the set of left cosets. Therefore $\theta_{\ga, \beta}$ is well defined and bijective for every $\ga,\beta\in \gaa$.
\item [(iv)] It is straightforward.
\end{itemize}
\end{proof}

The existence of  a consistent cross-section in a group extension like (\ref{diag:gpext2}) is not the only case that leads to an extension of a group $\gaa$ by a Hecke pair $\gh$. For example, the famous Hecke pair used by Jean-Beno\^{\i}t Bost and Alain Connes in \cite{bc} comes from the following group extension $0\ra \q\ra \q\rtimes \q_+^\times \ra  \q_+^\times \ra 1$. Here $H=\z$ and is a normal subgroup of $\q$ and so the Bost-Connes Hecke pair is $(\q\rtimes \q_+^\times, \z\rtimes 1)$. To see the action of $\q_+^\times$ on $\q$, one needs to consider them as matrix groups; $\q=\left\{ \left( \begin{array} {rr}1&b\\0&1 \end{array}\right); b\in \q \right\}$ and $\q_+^\times=\left\{ \left( \begin{array} {rr}1&0\\0&a \end{array}\right); a\in \q_+^\times \right\}$. In \cite{s2}, we use this realization of the Bost-Connes Hecke pair to show that this Hecke pair does not have (RD). One notes that the above group extension is not consistent with the Hecke pair $(\q,\z)$, because otherwise $\z\rtimes 1$ would be normal in $\q\rtimes \q_+^\times$ by Lemma \ref{lem:conext}(ii). Brenken generalized this example and proved that, regarding the group extension (\ref{diag:gpext2}), if $H$ is a normal subgroup of $G$ and for every $\ga\in \gaa$ the subgroup $\frac{\rho(\ga)(H)H}{H}$ of $G/H$ is finite, then $H$ is an almost normal subgroup of $E$, see Lemma 1.9 in \cite{brenken}. Similar conditions were given in Proposition 1.7 of \cite{ll}, see also \cite{baumg}. Although in the above papers there are weaker conditions which imply $H$ is almost normal in $E$, generally these weaker conditions do not imply Lemma \ref{lem:conext}(iii) which is an important ingredient in the proof of the following proposition.

\begin{proposition}
\label{prop:extrd}
Let $H$ be an almost normal subgroup of a group $G$ and let $ 1\ra G\ra E\ra \gaa \ra 1$ be a group extension consistent with the Hecke pair $\gh$. Assume $L_0$, $L$ and $L_1$ are length functions on the Hecke pairs $\gh$, $(E ,H)$ and the group $\gaa$, respectively, such that:
\begin{itemize}
\item [(i)] the Hecke pair $\gh$ and the group $\gaa$ have (RD) \wrt $L_0$ and $L_1$, respectively, and
\item [(ii)] there are positive constants $c$ and $e$ such that
\begin{equation}
\label{f:lengthcom}
L_0(s) +L_1(\ga) \leq cL(s,\ga)^e,\quad  \forall (s,\ga)\in E_\si,
\end{equation}
where $E_\si$ denotes the decomposition of $E$ as constructed in Remark \ref{r:ext}.
\end{itemize}
Then the Hecke pair $(E,H)$ has (RD) \wrt $L$.
\end{proposition}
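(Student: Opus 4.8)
The plan is to verify condition (ii) of Definition~\ref{def:rd} for the Hecke pair $(E,H)$ \wrt $L$. Using Lemma~\ref{lem:conext}(iv) I identify $<H\ba E_\si>$ with $<H\ba G>\times\gaa$, and, since $H\subseteq G=\ker\pi$ and $\si$ is consistent, the $\gaa$-component $\ga=\pi(g,\ga)$ is well defined on double cosets; indeed the computation $(h_1,1)(g,\ga)(h_2,1)=(h_1 g\rho(\ga)(h_2),\ga)$ together with $\rho(\ga)(H)=H$ shows that the double coset $H(g,\ga)H$ corresponds to $(HgH,\ga)$ with $HgH\in <G//H>$. Given $f\in\H_+(E,H)$ with $\text{supp}f\subseteq B_{r,L}(E,H)$ and $k\in\r_+(H\ba E)$, I decompose them into $\gaa$-slices $f_\ga\in\H_+\gh$ and $k_\beta\in\r_+(H\ba G)$ by $f_\ga(HgH):=f(H(g,\ga)H)$ and $k_\beta(Hg):=k(H(g,\beta))$. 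The first point to record is that condition~(\ref{f:lengthcom}) converts the single ball constraint into two: if $(s,\ga)\in\text{supp}f$ then $L_0(s)+L_1(\ga)\le cL(s,\ga)^e\le cr^e$, so every slice $f_\ga$ is supported in $B_{cr^e,L_0}\gh$ and the profile $\ga\mapsto\|f_\ga\|_2$ is supported in $B_{cr^e,L_1}(\gaa)$.

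Next I expand the convolution. Because the $\gaa$-component is multiplicative for the product~(\ref{f:extprod}), in $(f\ast k)(t,\ga)=\sum f((t,\ga)(g,\beta)\inv)k(g,\beta)$ the argument of $f$ has $\gaa$-component $\ga\beta\inv$; grouping the terms by this value $\alpha:=\ga\beta\inv$ (so $\beta=\alpha\inv\ga$) shows that the output slice $(f\ast k)_\ga$ equals $\sum_{\alpha\in\gaa}\Psi_\alpha$, where $\Psi_\alpha(t)=\sum_{g}f_\alpha(s(t,g))\,k_{\alpha\inv\ga}(g)$ and $s(t,g)$ is the $G$-component determined by $(t,\ga)=(s,\alpha)(g,\alpha\inv\ga)$. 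Unwinding~(\ref{f:extprod}) and~(\ref{f:extinv}) gives $s=t\,f(\alpha,\alpha\inv\ga)\inv\rho(\alpha)(g\inv)$, so for fixed $t$ the passage $g\mapsto s$ is the composition of the automorphism $\rho(\alpha)$ (which fixes $H$ by consistency) with a right translation by the fixed element $f(\alpha,\alpha\inv\ga)\in G$; by the same reasoning as in Lemma~\ref{lem:conext}(iii) this is a well-defined bijection of the coset space preserving the $\ell^2$-norm. After reindexing $k_{\alpha\inv\ga}$ by this bijection, $\Psi_\alpha$ becomes a genuine convolution $f_\alpha\ast\hat{k}_{\alpha\inv\ga}$ in the Hecke pair $\gh$ with $\|\hat{k}_{\alpha\inv\ga}\|_2=\|k_{\alpha\inv\ga}\|_2$, so property (RD) of $\gh$ \wrt $L_0$ yields $\|\Psi_\alpha\|_2\le P_0(cr^e)\|f_\alpha\|_2\|k_{\alpha\inv\ga}\|_2$, where $P_0$ is the polynomial of Definition~\ref{def:rd} for $\gh$.

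Setting $F(\alpha):=\|f_\alpha\|_2$ and $K(\beta):=\|k_\beta\|_2$ as non-negative functions on $\gaa$, the triangle inequality in $\ell^2$ together with the previous bound gives $\|(f\ast k)_\ga\|_2\le\sum_\alpha\|\Psi_\alpha\|_2\le P_0(cr^e)\,(F\ast K)(\ga)$, the last factor being the ordinary convolution on the group $\gaa$. Summing the squares over $\ga$, using that $F$ is supported in $B_{cr^e,L_1}(\gaa)$, and applying property (RD) of $\gaa$ \wrt $L_1$ (with polynomial $P_1$) yields
\[
\|f\ast k\|_2^2=\sum_{\ga}\|(f\ast k)_\ga\|_2^2\le P_0(cr^e)^2\,\|F\ast K\|_2^2\le P_0(cr^e)^2P_1(cr^e)^2\,\|F\|_2^2\,\|K\|_2^2.
\]
Since the slices partition $<H\ba E>$, the identities $\|F\|_2^2=\sum_\alpha\|f_\alpha\|_2^2=\|f\|_2^2$ and $\|K\|_2^2=\|k\|_2^2$ hold, and the choice $P(r):=P_0(cr^e)P_1(cr^e)$, which is a polynomial in $r$, verifies~(\ref{def:rd2}) for $(E,H)$ \wrt $L$.

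I expect the main obstacle to be the middle step: rigorously showing that each slice contribution $\Psi_\alpha$ is, after the reindexing coming from the cocycle $f$ and the conjugation $\rho(\alpha)$, an honest $\gh$-convolution to which the (RD) bound applies. This is precisely where the consistency of $\si$ and the bijectivity in Lemma~\ref{lem:conext}(iii) are indispensable: without them the twist would neither preserve $H$ and the $\ell^2$-norm, nor keep $\text{supp}f_\alpha$ inside a controlled $L_0$-ball, and the clean separation of the $G$- and $\gaa$-estimates would break down.
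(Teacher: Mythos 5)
Your proof is correct and follows essentially the same route as the paper's: slice $f$ and $k$ along the $\gaa$-component via Lemma \ref{lem:conext}(iv), use consistency and Lemma \ref{lem:conext}(iii) to untwist each slice contribution into a genuine $\gh$-convolution (your treatment of the cocycle/conjugation reindexing is in fact more explicit than the paper's, which silently replaces $\phi(s\theta_{\ga,\beta}(t\inv),\ga\beta\inv)$ by $\phi(st\inv,\ga\beta\inv)$ without noting that $\psi_\beta$ gets permuted), then apply (RD) of $\gh$ slice-wise and (RD) of $\gaa$ to the norm profiles $F$, $K$. The only nitpick is the final step: $P_0(cr^e)P_1(cr^e)$ need not literally be a polynomial in $r$ when $e\notin\n$; as in the paper, one should take any polynomial $P$ with $P_0(cr^e)P_1(cr^e)\leq P(r)$ for all $r>0$.
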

\begin{proof} Let $P_0$ and $P_1$ be the polynomials appearing in the definition of property (RD) for the Hecke pair $\gh$ and the group $\gaa$, respectively. Let $\si:\gaa \ra E$ be the cross-section consistent with the Hecke pair $\gh$ and let $\rho:\gaa \ra Aut(G)$ be as defined in Remark \ref{r:ext}. In the following the subgroup $H\times \{1_\gaa\}$ of $E_\si$ is denoted by the same notation as $H$ considered as the subgroup in $G$ or $E$.
Let $\phi\in \H_+(E_\si, H)$ such that  $supp(\phi)\subseteq B_{r,L}(E_\si, H)$ and let $\psi\in \r_+(H\ba E_\si)$.  For all $g\in H\ba G$ and $\beta, \ga\in \gaa$, we define
\[
\phi_{\ga,\beta}(g):=\phi(g,\ga\beta\inv), \qquad \qquad \psi_{\beta}(g):=\psi(g, \beta).
\]
Then $\phi_{\ga, \beta}\in \H_+\gh$ and $\psi_{\beta}\in \r_+(H\ba G)$ and by applying Inequality (\ref{f:lengthcom}), we have $supp(\phi_{\ga,\beta})\subseteq B_{cr^e,L_0}\gh$. Using (\ref{f:extprod}), (\ref{f:extinv}), Lemma \ref{lem:conext}(iii),(iv) and the fact that $t\in <H\ba G>$ if and only if $t\inv \in <G/H>$, we compute
\begin{eqnarray*}
\|\phi\ast \psi\|_2^2 &=& \sum_{(s,\ga)\in <H\ba E_\si>}\left( \sum_{(t,\beta) \in <H\ba E_\si>} \phi(s\theta_{\ga,\beta}(t\inv),\ga\beta\inv) \psi(t, \beta) \right)^2\\
&=& \sum_{(s,\ga)\in <H\ba G > \times \gaa}\left( \sum_{(t,\beta) \in <H\ba G > \times \gaa} \phi(st\inv,\ga\beta\inv) \psi(t, \beta) \right)^2\\
&=& \sum_{\ga\in \gaa} \sum_{s\in <H\ba G>}\left( \sum_{\beta\in \gaa}\sum_{t \in <H\ba G>} \phi_{\ga,\beta}(st\inv) \psi_\beta (t)\right)^2\\
&=& \sum_{\ga\in \gaa} \left\| \sum_{\beta\in \gaa}\phi_{\ga,\beta}\ast \psi_\beta (s)\right\|_2^2\\
&\leq& \sum_{\ga\in\gaa}\left( \sum_{\beta\in\gaa } \left\| \phi_{\ga,\beta}\ast \psi_\beta\right\|_2  \right)^2\\
&\leq& \sum_{\ga\in\gaa} \left(\sum_{\beta\in\gaa } P_0(cr^e) \| \phi_{\ga,\beta}\|_2\|\psi_\beta\|_2   \right)^2.\\
\end{eqnarray*}
Now, we set $\phi^\prime (\beta):= \left(\sum_{g\in <H\ba G>} \phi(g,\beta)^2\right)^{1/2}$ and $\psi^\prime(\beta):=\|\psi_\beta\|_2$. One notes that $\phi^\prime, \psi^\prime \in \r_+(\gaa)$ and $supp(\phi^\prime)\subseteq B_{cr^e, L_1}(\gaa)$ again by Inequality (\ref{f:lengthcom}). One also easily computes $\|\phi^\prime\|_2=\|\phi\|_2$ and $\|\psi^\prime\|_2=\|\psi\|_2$. Then we have
\begin{eqnarray*}
\phi^\prime(\ga\beta\inv )&=&\left( \sum_{g\in <H\ba G>} \phi(g,\ga\beta\inv )^2 \right)^{1/2}\\
&=& \left( \sum_{g\in <H\ba G>} \phi_{\ga,\beta}(g)^2 \right)^{1/2}\\
&=& \|\phi_{\ga,\beta} \|_2.
\end{eqnarray*}
Hence  we have
\begin{eqnarray*}
\|\phi\ast \psi\|_2^2 &\leq& P_0(cr^e)^2 \sum_{\ga\in\gaa} \left(\sum_{\beta\in\gaa } \phi^\prime (\ga\beta\inv) \psi^\prime(\beta)  \right)^2\\
&=& P_0(cr^e)^2 \| \phi^\prime \ast \psi^\prime \|_2^2\\
&\leq& P_0(cr^e)^2 P_1(cr^e)^2 \| \phi^\prime\|_2^2 \|\psi^\prime \|_2^2\\
&\leq& P(r)^2 \| \phi\|_2^2 \|\psi \|_2^2,
\end{eqnarray*}
where $P$ is a polynomial such that $P_0(cr^e) P_1(cr^e)\leq P(r)$ for all $r>0$.
\end{proof}

The above proposition is a generalization of Lemma 2.1.2 of \cite{j2}. An immediate corollary of the above proposition is that if a Hecke pair $\gh$ and a group $\gaa$ have property (RD), then the Hecke pair $(G\times \gaa, H\times 1_\gaa)$ has (RD). This situation can be generalized further in the form of the following proposition which clearly follows from a similar argument as the proof of the above proposition.

\begin{proposition}
Let $(G_i,H_i)$ be Hecke pairs for $i=1,\cdots, n$. Then the Hecke pair $(\prod_{i=1}^n G_i, \prod_{i=1}^n H_i)$ has (RD) if and only if every Hecke pair $(G_i,H_i)$ has (RD) for $i=1,\cdots, n$.
\end{proposition}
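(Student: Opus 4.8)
The plan is to reduce to the case $n=2$ by induction and then to prove the two-factor statement by adapting the slicing argument of Proposition \ref{prop:extrd}. Since $\prod_{i=1}^n G_i = G_1\times\left(\prod_{i=2}^n G_i\right)$ and $\prod_{i=1}^n H_i = H_1\times\left(\prod_{i=2}^n H_i\right)$, once the two-factor case is settled an obvious induction on $n$ handles the general statement: for the ``if'' direction the inductive hypothesis gives (RD) for $\left(\prod_{i=2}^n G_i,\prod_{i=2}^n H_i\right)$, and the two-factor result combines it with $(G_1,H_1)$; the ``only if'' direction is handled factor by factor below.

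For the two-factor ``if'' direction, suppose $(G_1,H_1)$ and $(G_2,H_2)$ have (RD) \wrt length functions $L_1$ and $L_2$ with associated polynomials $P_1$ and $P_2$ from Definition \ref{def:rd}. I would first observe that the direct product realizes the extension $1\ra G_1\ra G_1\times G_2\ra G_2\ra 1$ with the trivial section $\si(\ga)=(1_{G_1},\ga)$, so in the notation of Remark \ref{r:ext} the cocycle $f$ is trivial and $\rho$ is the identity; in particular the maps $\theta_{\ga,\beta}$ of Lemma \ref{lem:conext}(iii) are all the identity, and $\langle H_1\times H_2\,\ba\,G_1\times G_2\rangle$ is identified with $\langle H_1\ba G_1\rangle\times\langle H_2\ba G_2\rangle$ just as in Lemma \ref{lem:conext}(iv). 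I would then set $L(s,\ga):=L_1(s)+L_2(\ga)$, which is a length function on $(G_1\times G_2,H_1\times H_2)$ with $H_1\times H_2\subseteq N_L$, so that the length condition analogous to Proposition \ref{prop:extrd}(ii) holds with $c=e=1$.

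The core computation mirrors Proposition \ref{prop:extrd}, now treating the second factor as a Hecke pair rather than a group. Given $\phi\in\H_+(G_1\times G_2,H_1\times H_2)$ with $\text{supp}(\phi)\subseteq B_{r,L}(G_1\times G_2,H_1\times H_2)$ and $\psi\in\r_+((H_1\times H_2)\ba(G_1\times G_2))$, I would slice them as $\phi_\delta(g):=\phi(g,\delta)$ and $\psi_\beta(t):=\psi(t,\beta)$. Bi-invariance of $\phi$ and left-invariance of $\psi$ in the second coordinate guarantee $\phi_\delta\in\H_+(G_1,H_1)$ with $\text{supp}(\phi_\delta)\subseteq B_{r,L_1}(G_1,H_1)$ and $\psi_\beta\in\r_+(H_1\ba G_1)$. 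Because the product is direct, $(\phi\ast\psi)(s,\ga)=\sum_\beta(\phi_{\ga\beta\inv}\ast\psi_\beta)(s)$, and the triangle inequality in $\ell^2$ over $s$ followed by (RD) for $(G_1,H_1)$ bounds $\|\phi\ast\psi\|_2^2$ by $P_1(r)^2\sum_\ga\left(\sum_\beta\|\phi_{\ga\beta\inv}\|_2\|\psi_\beta\|_2\right)^2$. Setting $\phi'(\delta):=\|\phi_\delta\|_2$ and $\psi'(\beta):=\|\psi_\beta\|_2$, one checks that $\phi'\in\H_+(G_2,H_2)$ with $\text{supp}(\phi')\subseteq B_{r,L_2}(G_2,H_2)$, that $\psi'\in\r_+(H_2\ba G_2)$, and that $\|\phi'\|_2=\|\phi\|_2$, $\|\psi'\|_2=\|\psi\|_2$. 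The remaining sum is exactly $\|\phi'\ast\psi'\|_2^2$, so (RD) for $(G_2,H_2)$ gives $\|\phi\ast\psi\|_2^2\leq(P_1(r)P_2(r))^2\|\phi\|_2^2\|\psi\|_2^2$, i.e. $(G_1\times G_2,H_1\times H_2)$ has (RD) \wrt $L$ with polynomial $P_1P_2$.

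For the ``only if'' direction I would avoid slicing and instead combine the permanence results already established. Fix $i$ and consider the subgroup $\gaa_i:=G_i\times\prod_{j\neq i}H_j$ of $\prod_j G_j$; it contains the almost normal subgroup $\prod_j H_j$, so by Corollary \ref{cor:subHecke} the Hecke pair $\left(\gaa_i,\prod_j H_j\right)$ has (RD). The projection $p_i:\gaa_i\ra G_i$ is surjective with kernel $\prod_{j\neq i}H_j$, which is contained in $\prod_j H_j$; hence Proposition \ref{prop:surj} applies and shows that $(G_i,p_i(\prod_j H_j))=(G_i,H_i)$ has (RD). I expect the main technical obstacle to be the index bookkeeping in the slicing step — verifying that $\phi'$ is genuinely $H_2$-bi-invariant, that the supports land in the correct balls, and that the two $\ell^2$-norms are preserved — but this is precisely the content of the corresponding verifications in Proposition \ref{prop:extrd}, made easier here by the vanishing of the cocycle and of the maps $\theta_{\ga,\beta}$.
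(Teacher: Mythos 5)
Your proposal is correct. The ``if'' direction is precisely what the paper intends: the paper gives no explicit proof, saying only that the proposition ``clearly follows from a similar argument as the proof of the above proposition'' (Proposition \ref{prop:extrd}), and your slicing computation is exactly that adaptation, with the one genuine modification spelled out correctly --- the slicing indices $\ga,\beta$ now run over $<H_2\ba G_2>$ rather than over a group, $\phi'$ must be checked to be well-defined on double cosets of $(G_2,H_2)$ (which follows from bi-invariance of $\phi$ in the second coordinate), and the residual sum is identified with the Hecke-pair convolution $\|\phi'\ast\psi'\|_2^2$ in $\ell^2(H_2\ba G_2)$ so that property (RD) of $(G_2,H_2)$, not of a group, can be invoked. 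Where you genuinely depart from the paper is the ``only if'' direction: instead of re-running a slicing or restriction estimate, you pass to the intermediate subgroup $\gaa_i=G_i\times\prod_{j\neq i}H_j$ via Corollary \ref{cor:subHecke} and then project onto $G_i$ via Proposition \ref{prop:surj}, noting that the kernel $\prod_{j\neq i}H_j$ sits inside $\prod_j H_j$ so the hypothesis of that proposition is met. This route is clean and arguably preferable: it reuses permanence results already established in the paper and requires no new norm estimates, whereas a direct argument would need one to embed functions on $(G_i,H_i)$ into the product pair and redo the bookkeeping. The only (cosmetic) gap is that you do not remark that $\prod_i H_i$ is almost normal in $\prod_i G_i$ (immediate from $|H:H\cap gHg\inv|$ being multiplicative over coordinates, or from Lemma \ref{lem:conext}(i)), which is presupposed when calling the product a Hecke pair.
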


\begin{remark} Let $\gh$ be a Hecke pair. Assume the group extension (\ref{diag:gpext2}) is consistent with the Hecke pair $\gh$. When $\gaa$ is a finite group, it always has property (RD) \wrt the zero length function, which is denoted by $L_1$ here. If the Hecke pair $\gh$ has (RD) \wrt a length function $L_0$, one can use $L_0$ to construct a length function $L$ on $E$ which satisfies Inequality (\ref{f:lengthcom}).

For all $(g,\ga)\in E_\si$, define $p(g,\ga):= g$. One checks that
\[
p((g_1,\ga_1)(g_2,\ga_2))=p(g_1,\ga_1) p((1_G,\ga_1)(g_2,\ga_2)), \quad \forall (g_1,\ga_1), (g_2,\ga_2)\in E_\si.
\]
Now, for all $(g,\ga)\in E_\si$, define $k(g,\ga):=\max_{\beta\in \gaa} L_0(p((1_G,\beta)(g,\ga)))$. Then, for all $(g_1,\ga_1), (g_2,\ga_2)\in E_\si$, we have
\begin{eqnarray*}
k((g_1,\ga_1)(g_2,\ga_2))&=& \max_{\beta\in \gaa} L_0(p((1_G,\beta)[(g_1,\ga_1)(g_2,\ga_2)]))\\
&=& \max_{\beta\in \gaa} L_0(p([(1_G,\beta)(g_1,\ga_1)](g_2,\ga_2)))\\
&=& \max_{\beta\in\gaa} L_0(p((1_G,\beta)(g_1,\ga_1)) p((1_G,\beta \ga_1)(g_2,\ga_2))) \\
&\leq& \max_{\beta\in \gaa} [L_0(p((1_G,\beta)(g_1,\ga_1))) + L_0( p((1_G,\beta \ga_1)(g_2,\ga_2))) ]\\
&\leq& \max_{\beta\in \gaa} L_0(p((1_G,\beta)(g_1,\ga_1))) +  \max_{\delta\in \gaa} L_0( p((1_G,\delta)(g_2,\ga_2))) \\
&=& k(g_1,\ga_1) + k(g_2,\ga_2).
\end{eqnarray*}
Define $L(g,\ga):=k(g,\ga) + k((g,\ga)\inv)$. Clearly $L$ is a length function on $E_\si$ and Inequality (\ref{f:lengthcom}) holds for $c=1$ and $e=1$.

This discussion allows us to drop Condition (ii) about the length functions from Proposition \ref{prop:extrd}.
\end{remark}
The idea applied in the above remark is taken from Proposition 2.1.5 of \cite{j2}. The following corollary is the generalization of Proposition 2.1.5 of \cite{j2} in the setting of Hecke pairs. It is helpful to extend property (RD) to bigger Hecke pairs subject to some conditions. This is done in the following corollary.

\begin{corollary}
Consider a Hecke pair $\gh$ with property (RD) and let $E$ be a group containing $G$ as a subgroup of finite index. Set $G_1:=\bigcap_{x\in E} xGx\inv$ and $H_1:=G_1 \cap H$. The pair $(G_1,H_1)$ is a Hecke pair and has (RD). Furthermore, if the group extension $1\ra G_1\ra E\ra E/G_1\ra 1$ is consistent with the Hecke pair $(G_1,H_1)$, the Hecke pair $(E,H)$ has (RD).
\end{corollary}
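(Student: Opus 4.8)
The plan is to build $(E,H)$ up from $\gh$ in three stages: pass to the normal core $G_1$, climb back up to $E$ through the finite quotient $\gaa:=E/G_1$, and finally adjust the almost normal subgroup from $H_1$ to $H$. First I would record the elementary facts. Since $G$ has finite index in $E$, its core $G_1=\bigcap_{x\in E}xGx\inv$ is the kernel of the permutation action of $E$ on $E/G$, so $G_1$ is normal in $E$ with $|E:G_1|<\infty$; taking $x=1_E$ gives $G_1\subseteq G$, so $G_1$ has finite index in $G$ too. Hence $H_1=G_1\cap H$ satisfies $|H:H_1|\leq |G:G_1|<\infty$, so $H$ and $H_1$ are strongly commensurable. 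As $H$ is almost normal in $G$, Remark \ref{r:comm1} shows $H_1$ is almost normal in $G$; since $H_1\subseteq G_1\subseteq G$, the index $|H_1:H_1\cap gH_1g\inv|$ is finite for every $g\in G_1$, so $H_1$ is almost normal in $G_1$ and $(G_1,H_1)$ is a Hecke pair.

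For property (RD) I would use the existing machinery. Suppose $\gh$ has (RD) \wrt a length function $L$; then $H_1\subseteq H\subseteq N_L$, so $H$ and $H_1$ are strongly commensurable almost normal subgroups of $G$ both lying in $N_L$. Theorem \ref{thm:scom} then gives that $(G,H_1)$ has (RD) \wrt $L$, and since $G_1$ is a subgroup of $G$ containing the almost normal subgroup $H_1$, Corollary \ref{cor:subHecke} shows $(G_1,H_1)$ has (RD). Next, $\gaa=E/G_1$ is finite, hence has (RD) \wrt the zero length function, and by the remark preceding the statement one manufactures from the length function of $(G_1,H_1)$ a length function on $E$ satisfying Inequality (\ref{f:lengthcom}). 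As the extension $1\ra G_1\ra E\ra \gaa\ra 1$ is assumed consistent with $(G_1,H_1)$, Proposition \ref{prop:extrd} applies (with Lemma \ref{lem:conext}(i) providing that $H_1$ is almost normal in $E$) and yields that $(E,H_1)$ has (RD) \wrt some length function $L'$ with $H_1\subseteq N_{L'}$.

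It remains to replace $H_1$ by $H$. Here $H\supseteq H_1$ with $|H:H_1|<\infty$ and $H_1$ is almost normal in $E$, so by Remark \ref{r:comm1} $H$ is almost normal in $E$ as well; thus $H$ and $H_1$ are strongly commensurable almost normal subgroups of $E$. If I can produce a length function on $E$, equivalent to $L'$, that vanishes on all of $H$, then $(E,H_1)$ still has (RD) \wrt it by Remark \ref{r:eqlen1}, and Theorem \ref{thm:scom} transfers (RD) from $(E,H_1)$ to $(E,H)$, completing the proof. The main obstacle is precisely this last length-function adjustment: unlike the situation of Lemma \ref{lem:length}, neither $H$ nor $H_1$ is normal in $E$, so one cannot simply descend $L'$ to a quotient and kill a finite subgroup there.

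My plan for the obstacle is as follows. First note that $H_1=H\cap G_1$ is normal in $H$, since for $h\in H$ one has $hH_1h\inv=(hHh\inv)\cap(hG_1h\inv)=H\cap G_1=H_1$ (using $G_1\lhd E$). One may therefore choose the consistent cross-section $\si:\gaa\ra E$ so that $\si(\ga)\in H$ whenever $\ga\in\gaa_0:=\pi(H)=HG_1/G_1$, a finite subgroup of $\gaa$; this choice is automatically consistent on $\gaa_0$ because $H_1$ is normal in $H$, and one keeps the given consistent values off $\gaa_0$. With such a $\si$ one checks that, in the decomposition $E_\si=G_1\times\gaa$, the subgroup $H$ is carried onto $H_1\times\gaa_0$. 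Using $H_1\subseteq N_{L'}$ together with $\si(\ga)H_1\si(\ga)\inv=H_1$ for all $\ga$, a direct estimate then shows $L'$ is bounded on $H$, the only obstruction to vanishing coming from the finitely many cocycle values $f(\beta,\ga_0)\in G_1$ with $\beta\in\gaa$ and $\ga_0\in\gaa_0$. I would finally remove these finitely many values by a modification in the spirit of Lemma 2.1.3 of \cite{j2}, producing an equivalent length function that vanishes on $H_1\times\gaa_0=H$, which is exactly what the concluding application of Theorem \ref{thm:scom} requires.
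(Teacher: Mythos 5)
Your proposal is correct, and for most of its length it coincides with the paper's own proof: the passage from $\gh$ to $(G,H_1)$ via Theorem \ref{thm:scom}, then to $(G_1,H_1)$ via Proposition \ref{prop:finker} (your use of Corollary \ref{cor:subHecke} is the same step, since that corollary is just the special case of the proposition for inclusions), and then to $(E,H_1)$ via the remark preceding the statement together with Proposition \ref{prop:extrd}. The genuine difference is your fourth stage. The paper's proof stops after invoking Proposition \ref{prop:extrd}, i.e.\ it establishes property (RD) only for the Hecke pair $(E,H_1)$ and says nothing about how to pass from $(E,H_1)$ to $(E,H)$, even though the latter is what the statement asserts. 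You correctly identified this as the real obstacle: Theorem \ref{thm:scom} needs a single length function vanishing on both $H$ and $H_1$, and since neither subgroup need be normal in $E$, Lemma \ref{lem:length} is unavailable. Your resolution --- modify the consistent cross-section so that it maps $\pi(H)$ into $H$ (legitimate because $hH_1h^{-1}=(hHh^{-1})\cap(hG_1h^{-1})=H\cap G_1=H_1$ for $h\in H$, using normality of $G_1$ in $E$), observe that $H$ is then identified with $H_1\times\gaa_0$ inside $E_\si$, deduce that $L'$ is bounded on $H$ because the only nonzero contributions come from finitely many cocycle values, and replace $L'$ by an equivalent length function vanishing on $H$ (the Jolissaint-type modification of Lemma 2.1.3 of \cite{j2} works verbatim for any subgroup on which the length function is bounded, not only finite ones) --- is sound, and the concluding applications of Remark \ref{r:eqlen1} and Theorem \ref{thm:scom} then finish the argument. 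So your write-up is not merely a different route; it supplies a step that the published proof omits, and in that sense it is more complete than the paper's own.
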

\begin{proof}
$G_1$ is a normal subgroup of $E$ of finite index. Thus $H_1$ is a subgroup of $H$ of finite index. This implies that the pair $(G,H_1)$ is a Hecke pair with property (RD). By proposition \ref{prop:finker}, the pair $(G_1,H_1)$ is a Hecke pair with (RD). The rest follows from the above remark and Proposition \ref{prop:extrd}.
\end{proof}

%%%%%%%%%%%%%%%%%%%%%%%%%%%%%%%%%%%%%      Acknowledgements       %%%%%%%%%%%%%%%%%%%%%%%%%%%%%%%%%%%%%%%

\noindent {\bf Acknowledgements.} Example \ref{ex:examples} is the result of a helpful discussion that I have had with Will Sawin about nearly normal subgroups in mathoverflow.net. I would also like to thank the anonymous referee of this paper for suggesting several corrections.

%%%%%%%%%%%%%%%%%%%%%%%%%%%%%%%%%%%%%%%%%%%%%%%%%%%%%%%%%%%%%%%%%%%%%%%%%%%%%%%%%%%%%%%%%%%%%%%%%%%%%%%%%%%%%%%%%%%%%%%%%
%%%%%%%%%%%%%%%%%%%%%%%%%%%%%%%%%%%%%%%%%%%%      bibliography       %%%%%%%%%%%%%%%%%%%%%%%%%%%%%%%%%%%%%%%%%%%%%%%%%%         %%%%%%%%%%%%%%%%%%%%%%%%%%%%%%%%%%%%%%%%%%%%%%%%%%%%%%%%%%%%%%%%%%%%%%%%%%%%%%%%%%%%%%%%%%%%%%%%%%%%%%%%%%%%%%%%%%%%%%%%%
\bibliographystyle {amsalpha}
\begin {thebibliography} {VDN92}

\bibitem{baumg} {\bf U. Baumgartner, J. Foster, J. Hicks, H. Lindsay, B. Maloney, I. Raeburn, S. Richardson,} Hecke algebras and group extensions. Communications in algebra, 33, (2005), 4135--4147.

\bibitem{bc}{\bf J.B. Bost, A. Connes,} Hecke algebras, type III factors and phase transition with spontaneous symmetry breaking in number theory. Selecta Math. (New Series) Vol.{\bf 1}, no.3, 411-456, (1995).

\bibitem{brenken} {\bf B. Brenken,} Hecke algebras and semigroup crossed product \cs-algebras. Pacific Journal of Mathematics, vol. {\bf 187}, no. 2, (1999), 241--262.

\bibitem{brown} {\bf K. S. Brown,} Cohomology of groups. Springer-Verlag, New York, (1982).

%\bibitem{chat} {\bf I. L. Chatterji,} On property (RD) for certain discrete groups. PhD Thesis, Swiss Federal Institute of Technology Zurich, (2001).

%\bibitem{chatru} {\bf I. L. Chatterji, K. Ruane,} Some geometric groups with rapid decay. Geom. funct. anal. vol. {\bf 15}, (2005), 311--339.

\bibitem{harpe} {\bf P. de la Harpe,} Topics in geometric group theory. The University of Chicago Press, (2000).

%\bibitem{j1} {\bf P. Jolissaint,} $K$-theory of reduced \cs-algebras and rapidly decreasing functions on groups. Journal of $K$-Theory {\bf 2:} 723--735, (1989).

\bibitem{j2} {\bf P. Jolissaint,} Rapidly decreasing functions in reduced \cs-algebras of groups. Trans. Amer. Math. Soc. Vol. {\bf 317}, no. 1, 167--196, (1990).

\bibitem{krieg} {\bf A. Krieg,} Hecke algebras. Mem. Amer. Math. Soc. 87 (435), (1990).

\bibitem{ll} {\bf M. Laca, N. S. Larsen,} Hecke algebras of semidirect products. Proc. Amer. Math. Soc. vol. 131, no. 7, (2003), 2189--2199.

\bibitem{luck} {\bf W. L\"{u}ck,} Survey on geometric group theory. M\"{u}nster J. of Math. {\bf 1} (2008), 73--108.

%\bibitem{ls} {\bf R. C. Lyndon, P. E. Shupp,} Combinatorial group theory. Springer-Verlag, (2001).

\bibitem{meier} {\bf J. Meier,} Groups, graphs and trees, an introduction to the geometry of infinite groups. Cambridge University Press, (2008).

\bibitem{neu} {\bf B. H. Neumann,} Groups with finite classes of conjugate subgroups. Math. Zeitschr. Bd. 63, S. (1955), 76--96.

\bibitem{serre} {\bf J-P. Serre,} Trees. Springer-Verlag Berlin Heidelberg (1980).

\bibitem{shimura} {\bf G. Shimura,} Introduction to the Arithmetic Theory of Automorphic Functions. Iwanami Shoten and Princeton University Press, (1971).

\bibitem{s1} {\bf V. Shirbisheh,} Property (RD) for Hecke pairs. Mathematical Physics, Analysis and Geometry, vol. {\bf 15}, no. 2, (2012), 173--192.

\bibitem{s3} {\bf V. Shirbisheh,} An erratum to ``Property (RD) for Hecke pairs''. arXiv:1212.2413, (2012).

\bibitem{s2} {\bf V. Shirbisheh,} Locally compact Hecke pairs, amenability and property (RD). (In preparation).

\bibitem{tzanev} {\bf K. Tzanev,} Hecke \cs-algebras and amenability,  J. Operator Theory, {\bf 50}, (2003), 169--178.

%%%%%%%%%%%%%%%%%%%%%555555555555555555555555555555555555555555555555555555555555555555555555555555555555555555555555555555555555555555555
\end {thebibliography}
\end{document}